\newtheorem{theorem}{Theorem}[section]
\newtheorem{lemma}[theorem]{Lemma}
\newtheorem{corollary}[theorem]{Corollary}
\newtheorem{proposition}[theorem]{Proposition}
\newtheorem{question}[theorem]{Question}
\theoremstyle{definition}
\theoremstyle{remark}
\newtheorem{remark}[theorem]{Remark}
\newtheorem*{sub-claim}{sub-claim}
\newcommand{\F}{\mathcal{F}}
\newcommand{\G}{\mathcal{G}}
\newcommand{\N}{\mathbb{N}}
\newcommand{\explicitSet}[1]{\left\lbrace #1 \right\rbrace}
\newcommand{\brackets}[1]{\left\langle #1 \right\rangle}
\newcommand{\set}[2]{\explicitSet{#1 \colon #2}}
\newcommand{\seq}[2]{\brackets{#1 \colon #2}}
\newcommand{\0}{\emptyset}
\renewcommand{\a}{\alpha}
\renewcommand{\b}{\beta}
\newcommand{\card}[1]{|#1|}
\newcommand{\sub}{\subseteq}
\newcommand{\rest}{\!\restriction\!}
\newcommand{\w}{\omega}
\newcommand{\continuum}{\mathfrak{c}}
\newcommand{\homeo}{\approx}
\newcommand{\closure}[1]{\overline{#1}}
\newcommand{\s}{\sigma}
\renewcommand{\t}{\tau}
\newcommand{\B}{\mathcal{B}}
\newcommand{\U}{\mathcal{U}}
\newcommand{\thick}{\Theta}
\newcommand{\plim}{p\mbox{-}\!\lim_{n \in \N}}
\newcommand{\qlim}{q\mbox{-}\!\lim_{n \in \N}}
\newcommand{\iso}{\cong}
\renewcommand{\tt}{\mathfrak{t}_\thick}
\newcommand{\tower}{\mathfrak{t}}
\newcommand{\pin}{\mathfrak{p}}
\renewcommand{\P}{\mathbb{P}}
\renewcommand{\k}{\kappa}
\newcommand{\Inf}{\mathcal P (\w) / \text{\textit{fin}}}
\begin{document}

\title{$P$-sets and minimal right ideals in $\N^*$}
\author{W. R. Brian}
\address {
William R. Brian\\
Department of Mathematics\\
Tulane University\\
6823 St. Charles Ave.\\
New Orleans, LA 70118}
\email{wbrian.math@gmail.com}
\subjclass[2010]{Primary: 54D35, 03E35. Secondary: 03E17, 37B99, 06A07, 22A15}
\keywords{topology/dynamics/algebra in $\b\N$ and $\N^*$; $P$-sets; (minimal) right ideals; tower number; chain transitivity}

\begin{abstract}
Recall that a $P$-set is a closed set $X$ such that the intersection of countably many neighborhoods of $X$ is again a neighborhood of $X$. We show that if $\tower = \continuum$ then there is a minimal right ideal of $(\b\N,+)$ that is also a $P$-set. We also show that the existence of such $P$-sets implies the existence of $P$-points; in particular, it is consistent with ZFC that no minimal right ideal is a $P$-set. As an application of these results, we prove that it is both consistent with and independent of ZFC that the shift map is (up to isomorphism) the unique chain transitive autohomeomorphism of $\N^*$.
\end{abstract}

\maketitle

\section{Introduction}

The dynamical and algebraic structure of $\b\N$ and $\N^*$ has played a major part in modern combinatorics and algebra. The minimal right ideals of $\b\N$ -- or, equivalently, its minimal dynamical subsystems -- have a special place in this theory. In this paper we study how these ideals are embedded in $\N^*$.

Our main result is that it is both consistent with and independent of ZFC that some minimal right ideal is also a $P$-set.

In Section~\ref{sec:thickfilters}, we interpret the minimal right ideals of $\N^*$ as the ultrafilters on a certain partial order. This provides a tool for studying how these ideals are embedded in $\N^*$, and some of the techniques for analyzing the topology of $\N^*$ (the set of all ultrafilters on a different partial order, namely $\Inf$) carry over. In this way we are able to prove that if $\tower = \continuum$ then some minimal right ideal is also a $P$-set. Several corollaries of this result will be given, for example that there is an idempotent ultrafilter that is both minimal and maximal.

In Section~\ref{sec:independence}, we show that if $I$ is a minimal right ideal and a $P$-set, then there is a finite-to-one $f: \N \to \N$ such that $\b f(I)$ is a $P$-point. It follows that it is consistent to have no minimal right ideal be a $P$-set. We also show in this section that, under ZFC, there is a nowhere dense closed right ideal that is a $P$-set.

In Section~\ref{sec:shiftmap}, we give an application of these results by obtaining another consistency/independence theorem. The shift map, the canonical dynamical structure on $\N^*$ obtained by extending the map $n \mapsto n+1$ to $\b\N$, is shown to be a chain transitive map. We will prove that it is consistent with and independent of ZFC that the shift map is the only chain transitive autohomeomorphism of $\N^*$. The proof of independence will use the fact that some nowhere dense right ideal is a $P$-set, together with Parovi\v{c}enko's topological characterization of $\N^*$ under CH.

\section{Notation and preliminaries}

\subsection*{Ultrafilters}
Let $(\P,\leq)$ be a partial order. A \textbf{filter} on $\P$ is a subset $\F$ of $\P$ satisfying
\begin{itemize}
\item (nontriviality) $\0 \neq \F \neq \P$.
\item (upwards heredity) if $a \in \F$ and $a \leq b$ then $b \in \F$.
\item (finite intersection property) if $a,b \in \F$ then there is some $c \in \F$ with $c \leq a$ and $c \leq b$.
\end{itemize}
An \textbf{ultrafilter} on $\P$ is a maximal filter: a filter $\F$ such that no set properly containing $\F$ is also a filter. Ultrafilters on the partial order $(\mathcal P(\N),\sub)$ are perhaps most familiar, and here we will call these simply \textbf{ultrafilters on $\N$} or, context permitting, ultrafilters. A \textbf{free} (ultra)filter on $\N$ is one that contains every cofinite set.

If $\F$ satisfies the nontriviality and finite intersection properties, then $\F$ is called a \textbf{filter base}, and $\set{b \in P}{a \leq b \text{ for some } a \in \F}$ is a filter, namely the filter \textbf{generated by} $\F$. A \textbf{chain} in $\P$ is a totally ordered subset of $\P$. Note that every chain is a filter base.

If $\F$ satisfies the nontriviality property and is upwards hereditary, then $\F$ is an \textbf{up-set} of $\P$ (the up-sets of $(\N,\sub)$ are often referred to as \emph{Furstenberg families} in the dynamical systems literature; the up-sets of $\Inf$ are often referred to as \emph{semifilters} in the set theory literature).

Following convention, if $A,B \sub \N$ we write $A \sub^* B$ whenever $A \setminus B$ is finite and we write $A =^* B$ whenever $A \sub^* B$ and $B \sub^* A$. $\Inf$ is the set of $=^*$-equivalence classes of infinite subsets of $\N$, naturally ordered by $\sub^*$. $\Inf$ is a well-studied Boolean algebra, and the ultrafilters on the partial order $\Inf - \{0\}$ are the well-known space $\N^*$. From now on, we will abuse notation slightly and write $\Inf$ to mean the partial order $\Inf - \{0\}$.

Given $(\P,\leq)$ and $A \sub \P$, $p \in \P$ is a \textbf{(lower) bound} for $A$ provided $p \leq a$ for every $a \in A$. $A$ is \textbf{unbounded} if it has no bound. Note: ``pseudo-intersection'' is often used synonymously with ``lower bound'' when working with $\Inf$.

\subsection*{Some small cardinals} Much has been written about the ``small cardinal'' numbers, of which there are many. Here we define four, three old and one new (the introduction of a new one can be forgiven, since we will show in Theorem~\ref{thm:cardinals} it is really just $\tower$ in disguise). Our definitions are not quite standard. This is done intentionally, to emphasize that our main theorem is really a theorem about a certain partial order $\thick$. In fact, the proof of our main theorem can be viewed as the translation to $\thick$ of a well-known proof done in $\Inf$.

Recall that $A \sub \N$ is \textbf{thick} if it contains arbitrarily long intervals. Equivalently, $A$ fails to be thick if and only if there is some $k$ such that $\bigcap_{n \leq k}(A - n)$ is empty (or if and only if $\bigcap_{n \leq k}(A+n)$ is finite). The thick sets form an up-set of $\mathcal \P(\N)$. If $A =^* B$, then $A$ is thick if and only if $B$ is thick, so the (equivalence classes of) thick sets also form an up-set of $\Inf$. Let $\thick$ denote the family of thick sets modulo $=^*$, partially ordered by $\sub^*$. We refer to the (ultra)filters on $(\thick,\sub^*)$ as $\thick$\textbf{-(ultra)filters}.

Recall that $\N^\N$ denotes the set of all functions $\N \to \N$, and if $f,g \in \N^\N$, then $f <^* g$ if and only if $\set{n \in \N}{f(n) \not < g(n)}$ is finite.

We now define the small cardinals we will need:
\begin{itemize}
\item $\tt$ is the smallest size of an unbounded chain in $\thick$.
\item $\tower$ is the smallest size of an unbounded chain in $\Inf$.
\item $\pin$ is the smallest size of an unbounded filter base in $\Inf$.
\item $\mathfrak{b}$ is the smallest size of an unbounded subset of $(\N^\N,>^*)$.
\end{itemize}

Defined this way, it is easy to see that $\tt$ is simply the analogue of $\tower$ in $\thick$. While the definition of $\tt$ is ours, the last three cardinals here are standard, and we leave it to the reader to verify that these definitions are equivalent to the usual ones given in \cite{EvD} or \cite{AB2}.

Observe that the partial order $(\N^\N,>^*)$ has the property of being a filter. Using this, $\mathfrak{b}$ can also be characterized as the smallest size of an unbounded chain in $(\N^\N,>^*)$ or an unbounded filter base in $(\N^\N,>^*)$ (see also \cite{EvD}, Theorem 3.3). Thus $\mathfrak{b}$ is also just the analogue of $\tower$ (or of $\pin$) in a different partial order.

\subsection*{$\b\N$ and Stone duality}
The set of all ultrafilters on $\N$ is denoted $\b\N$. As usual, we identify the principal ultrafilter $\set{A \sub \N}{n \in A}$ with $n$, so that $\N \sub \b\N$. For each $A \sub \N$, $\closure{A} = \set{p \in \b\N}{A \in p}$, and taking the sets of this form as the basis for a topology, $\b\N$ is the Stone-\v{C}ech compactification of $\N$. This is the unique compactification of $\N$ with the extension property: for any compact Hausdorff space $K$ and any map $f: \N \to K$, there is a (unique) continuous extension $\b f: \b\N \to K$. We write $\N^* = \b\N \setminus \N$ for the set of all free ultrafilters on $\N$, and, for every $A \sub \N$, $A^* = \closure{A} \cap \N^*$.

If $\F$ is a free (ultra)filter on $\N$, then $\F$ restricts to an (ultra)filter on $\Inf$. Conversely, an (ultra)filter on $\Inf$ gives rise to a unique free (ultra)filter. In what follows we will freely conflate these two notions. Topologically this makes no difference, because $A =^* B$ if and only if $A^* = B^*$.

If $\F$ is a free filter, then $\F^* = \bigcap_{X \in \F}A^*$ is a nonempty closed subset of $\N^*$. Conversely, for every closed $X \sub \N^*$ there is a unique free filter $\F$ such that $\F^* = X$, namely $\F = \set{A \sub \N}{X \sub A^*}$. This correspondence, part of what is called Stone duality, will be used frequently (often implicitly) in what follows.

A closed subset $X$ of $\N^*$ is a $P$\textbf{-set} if and only if, for any countable collection $\mathcal A$ of open sets containing $X$, $X$ is in the interior of $\bigcap \mathcal A$. If $X = \{x\}$ is a $P$-set, then $x$ is called a $P$\textbf{-point}. If $\F$ is a filter such that $\F^*$ is a $P$-set, we say that $\F$ is a $P$\textbf{-filter}. Using Stone duality, $\F$ is a $P$-filter if and only if every countable subset of $\F$ has a lower bound in $\F$.

\subsection*{Dynamics and algebra}
A \textbf{dynamical system} is a pair $(X,f)$, where $X$ is a compact Hausdorff space and $f: X \to X$ is continuous. For our purposes, $X$ will usually be $\b\N$ or $\N^*$.

For each $p \in \N^*$, define $\s(p)$ to be the unique ultrafilter generated by $\set{A+1}{A \in p}$. This is the \textbf{shift map} on $\b\N$, and whenever we speak of $\b\N$ as a dynamical system it is understood that we are talking about the shift map. The shift map is the unique continuous extension to $\b\N$ of the map on $\N$ given by $n \mapsto n+1$.

If $\seq{x_n}{n \in \N}$ is a countable sequence of points in any compact Hausdorff space and if $p \in \b\N$, we define $\plim x_n$ to be the unique element of $\bigcap_{A \in p}\closure{\set{x_n}{n \in A}}$. Then, for every $p,q \in \b\N$, define $p+q = \qlim \s^n(p)$. This is the standard algebraic structure on $\b\N$ (or one of the two -- multiplication also has a natural extension to $\b\N$).

Some authors (see, e.g., \cite{Brg}), define $+$ differently on $\b\N$:
$$p+q = \set{A \sub \N}{\set{n \in \N}{(A - n) \in p} \in q}.$$
This definition is equivalent to the one we give here (the equivalence is well known; see \cite{Bls} for some discussion). Warning: some authors, including some that we cite below, exchange the roles of $p$ and $q$ in this definition.

The map $q \mapsto p+q$ (with $p$ fixed) is continuous, but the map $p \mapsto p+q$ (with $q$ fixed) is not. This makes $\b\N$ a left-topological semigroup. Its \textbf{right ideals} are those $I \sub \b\N$ such that $I + \b\N \sub I$. A \textbf{minimal} right ideal is a right ideal that does not properly contain any other right ideal.

For any dynamical system $(X,f)$, $Y$ is a \textbf{subsystem} of $X$ provided that $Y$ is closed and $f(Y) \sub Y$. A subsystem is \textbf{minimal} if it does not properly contain any other subsystems.

A simple application of Zorn's Lemma shows that every subsystem of $\b\N$ contains a minimal subsystem, and every right ideal contains a minimal right ideal. In $\b\N$, these are related by the following:

\begin{proposition}\label{prop:ideals}
$X$ is a subsystem of $(\b\N,\s)$ if and only if it is a closed right ideal of $(\b\N,+)$. $X$ is a minimal subsystem if and only if $X$ is a minimal right ideal.
\end{proposition}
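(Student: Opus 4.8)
The plan is to reduce both halves of the statement to the single identity $p + n = \s^n(p)$, valid for every $p \in \b\N$ and every $n \in \N$ (with $n$ identified with its principal ultrafilter). This is immediate from the definitions, since the limit along a principal ultrafilter $n$ of a sequence $\seq{x_m}{m \in \N}$ is just $x_n$; in particular $\s(p) = p + 1$, so the shift map is nothing but translation on the right by $1 \in \b\N$. Granting this, one direction of the first equivalence is one line: if $I$ is a closed right ideal and $p \in I$, then $\s(p) = p + 1 \in I + \b\N \sub I$, so $\s(I) \sub I$ and $I$ is a subsystem. For the converse, suppose $Y$ is a subsystem; then $Y$ is closed and $\s^n(Y) \sub Y$ for all $n$ by induction, so for $p \in Y$ and $q \in \b\N$ the whole sequence $\seq{\s^n(p)}{n \in \N}$ lies in $Y$, and hence
\[
p + q = \qlim \s^n(p) \in \bigcap_{A \in q}\closure{\set{\s^n(p)}{n \in A}} \sub \closure{Y} = Y .
\]
This gives $Y + \b\N \sub Y$, so $Y$ is a closed right ideal, completing the first equivalence.

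For the second equivalence I would first show that every minimal right ideal $I$ is closed. Pick $p \in I$; by associativity of $+$ the set $p + \b\N$ is a right ideal, and it is contained in $I$ because $p \in I$ and $I + \b\N \sub I$, so minimality of $I$ forces $I = p + \b\N$. Since $q \mapsto p + q$ is continuous and $\b\N$ is compact, $p + \b\N$ is a compact, hence closed, subset of $\b\N$. Thus a minimal right ideal is a closed right ideal, and so a subsystem by the first part; moreover any subsystem $Y \sub I$ is a right ideal, so $Y = I$, and $I$ is a minimal subsystem. Conversely, a minimal subsystem $Y$ is a closed right ideal, so by the application of Zorn's Lemma noted before the proposition it contains a minimal right ideal $I$; then $I$ is closed and hence a subsystem contained in $Y$, so $Y = I$ by minimality, and $Y$ is a minimal right ideal.

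Essentially everything here is routine once the identity $p + n = \s^n(p)$ is in place; the only genuinely substantive step is the compactness argument showing that minimal right ideals are closed, and even that is short given the two facts already recorded above — that $q \mapsto p+q$ is continuous and that $(\b\N,+)$ is a semigroup (so that $p + \b\N$ really is a right ideal).
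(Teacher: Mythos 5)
Your proof is correct. Note that the paper does not actually argue this proposition at all: it simply cites Bergelson's survey (Theorem 2.1 of \cite{Brg}), so what you have written is a self-contained replacement for that citation, and it is essentially the standard argument underlying it. The reduction to the identity $p+n=\s^n(p)$ (hence $\s(p)=p+1$) is exactly right given the paper's definition $p+q=\qlim \s^n(p)$ with principal $q$; the converse direction via the orbit $\set{\s^n(p)}{n\in\N}\sub Y$ and closedness of $Y$ is sound; and the one genuinely nontrivial ingredient --- that a minimal right ideal is closed because it equals $p+\b\N$, which is compact by continuity of $q\mapsto p+q$ --- is the same device used in the literature. The appeal to associativity is legitimate since the paper declares $(\b\N,+)$ a left-topological semigroup, and the Zorn's Lemma facts you quote are stated just before the proposition. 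The only cosmetic point worth flagging is a convention: as literally defined in the paper, the empty set is both a right ideal and a subsystem, so ``minimal'' must be read as ``minimal nonempty'' for the statement to be meaningful; you implicitly assume this when you pick $p\in I$ (and when you conclude $Y=I$ from minimality), which matches the intended convention and costs nothing, but a one-line remark to that effect would make the argument airtight.
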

\begin{proof}
See \cite{Brg}, Theorem 2.1.
\end{proof}

\section{$\thick$-ultrafilters}\label{sec:thickfilters}

The following two lemmas make the connection between $\thick$-ultrafilters and minimal right ideals. These lemmas are hinted at in \cite{BHM}, Theorem 2.9(c).

\begin{lemma}\label{lem:thickfilters}
Let $\F$ be a free filter. $\s(\F^*) \sub \F^*$ if and only if for every $A \in \F$, $A-1 \in \F$. $\F^* = \s(\F^*)$ if and only if, in addition, $A+1 \in \F$ for every $A \in \F$.
\end{lemma}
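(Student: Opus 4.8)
The plan is to work from the formula $\s(p) = \set{B \sub \N}{B - 1 \in p}$, which is immediate from the definition of $\s(p)$ as the filter generated by $\set{A+1}{A \in p}$: a set $B$ contains some $A+1$ with $A \in p$ exactly when $B - 1 \supseteq A$ for such an $A$, i.e.\ exactly when $B - 1 \in p$. I will also use freely the Stone-duality dictionary from the preliminaries, in particular that $p \in \F^*$ iff $\F \sub p$, that a closed subset of $\N^*$ and the free filter it determines recover one another, and that $\s$ restricts to a continuous self-map of $\N^*$.

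\emph{First biconditional.} The backward direction is a one-line computation: if $A - 1 \in \F$ for every $A \in \F$, then for any $p \in \F^*$ and any $A \in \F$ we have $A - 1 \in \F \sub p$, hence $A \in \s(p)$; thus $\F \sub \s(p)$, i.e.\ $\s(p) \in \F^*$. For the forward direction I argue contrapositively. Suppose $A \in \F$ but $A - 1 \notin \F$. Since $\F$ is free, every $F \in \F$ meets $\N \setminus (A - 1)$ in an infinite set: if $F \setminus (A-1)$ were finite then its (cofinite) complement would lie in $\F$, and intersecting it with $F$ would place $F \cap (A-1)$, a subset of $A - 1$, into $\F$, forcing $A - 1 \in \F$. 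Hence $\F \cup \{\N \setminus (A-1)\}$ has the finite intersection property; extend it to an ultrafilter $p$, which is free since it extends $\F$. Then $p \in \F^*$ while $A - 1 \notin p$, so $A \notin \s(p)$ and thus $\s(p) \notin A^* \supseteq \F^*$, witnessing $\s(\F^*) \not\sub \F^*$.

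\emph{Second biconditional.} As $\s$ is continuous and $\F^*$ is compact, $\s(\F^*)$ is a closed subset of $\N^*$, so by Stone duality $\s(\F^*) = \G^*$ for the free filter $\G = \set{B \sub \N}{B - 1 \in \F}$: indeed $\s(\F^*) \sub B^*$ iff $B \in \s(p)$, i.e.\ $B - 1 \in p$, holds for every ultrafilter $p$ extending $\F$, and $B - 1$ lies in every ultrafilter extending $\F$ precisely when $B - 1 \in \F$. The first part of the lemma already characterizes when $\s(\F^*) \sub \F^*$, so it remains to see that $\F^* \sub \s(\F^*)$ — equivalently, by Stone duality, $\G \sub \F$ — holds precisely when $A + 1 \in \F$ for every $A \in \F$. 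If $\G \sub \F$, then for $A \in \F$ we have $(A+1) - 1 = A \in \F$, so $A + 1 \in \G \sub \F$. Conversely, if $A + 1 \in \F$ whenever $A \in \F$, take any $B \in \G$; then $B - 1 \in \F$, hence $(B - 1) + 1 \in \F$, and since $(B-1)+1 \sub B$, upward heredity gives $B \in \F$; thus $\G \sub \F$. Combining the two characterizations yields $\F^* = \s(\F^*)$ iff $A - 1 \in \F$ and $A + 1 \in \F$ for every $A \in \F$.

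The one step that needs a moment's care is the contrapositive in the first biconditional, where the ultrafilter avoiding $A - 1$ must be chosen \emph{free}; this is precisely where freeness of $\F$ enters, upgrading ``$F$ meets $\N \setminus (A-1)$'' to ``$F$ meets it infinitely.'' Everything else is bookkeeping with the formula for $\s$ and the correspondence between closed subsets of $\N^*$ and free filters on $\N$.
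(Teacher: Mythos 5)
Your proof is correct. It is the same Stone-duality reduction as the paper's, but executed along a genuinely different line: the paper works at the level of closed sets, computing $\s^{-1}(\F^*)=\bigcap_{A\in\F}(A-1)^*$ and $\s(\F^*)=\bigcap_{A\in\F}(A+1)^*$ from continuity of $\s$ and $\s^{-1}$, and reads both directions of each biconditional off these two identities; you instead work pointwise from the formula $\s(p)=\set{B\sub\N}{B-1\in p}$. In particular, for the forward direction of the first claim you re-derive the needed piece of duality by hand, building a free ultrafilter extending $\F$ that omits $A-1$ (and you are right that freeness of $\F$ is exactly what makes the witness free), where the paper simply invokes the correspondence $B\in\F\Leftrightarrow\F^*\sub B^*$; for the second claim you identify $\s(\F^*)$ as $\G^*$ with $\G=\set{B\sub\N}{B-1\in\F}$ and reduce $\F^*\sub\s(\F^*)$ to the filter inclusion $\G\sub\F$, characterized by the condition $A+1\in\F$, rather than using the identity $\s(\F^*)=\bigcap_{A\in\F}(A+1)^*$. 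What your route buys is that it never has to interchange $\s$ with an infinite intersection (a step the paper takes silently; it is legitimate, by compactness and the downward directedness of $\set{A^*}{A\in\F}$, or by injectivity of the shift on $\N^*$, but it does require justification), at the cost of somewhat more ultrafilter bookkeeping; the paper's set-level computation is shorter once those image/preimage identities are granted. Both arguments are complete, and your decomposition into the two inclusions $\s(\F^*)\sub\F^*$ and $\F^*\sub\s(\F^*)$ matches the structure of the statement exactly.
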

\begin{proof}
The ``only if'' direction of both claims follow easily from Stone duality and the continuity of $\s^{-1}$ and $\s$:
$$\s^{-1}(\F^*) = \s^{-1}\left(\bigcap_{A \in \F}A^*\right) = \bigcap_{A \in \F}\s^{-1}(A^*) = \bigcap_{A \in \F}(A-1)^*.$$
$$\s(\F^*) = \s\left(\bigcap_{A \in \F}A^*\right) = \bigcap_{A \in \F}\s(A^*) = \bigcap_{A \in \F}(A+1)^*,$$
We can then use these equalities to prove the ``if'' direction of these claims: if $A-1 \in \F$ for every $A \in \F$ then
$$\s^{-1}(\F^*) =\bigcap_{A \in \F}(A-1)^* \supseteq \bigcap_{A \in \F}A^* = \F^*,$$
which implies $\s(\F^*) \sub \F^*$. If also $A-1 \in \F$ for every $A \in \F$ then
$$\s^(\F^*) = \bigcap_{A \in \F}(A+1)^* \supseteq \bigcap_{A \in \F}A^* = \F^*.$$
In the latter case, we clearly have $\F^* = \s(\F^*)$.
\end{proof}

\begin{lemma}\label{lem:thickultrafilters}
$\F^*$ is a minimal right ideal if and only if $\F$ is a $\thick$-ultrafilter.
\end{lemma}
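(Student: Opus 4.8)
The plan is to translate ``$\F^*$ is a minimal right ideal'' into a maximality statement among filters of thick sets. The key dictionary, which I would set up first, is that after replacing a filter on the poset $(\thick,\subseteq^*)$ by the free filter it generates on $\N$, a $\thick$-filter is precisely a free filter on $\N$ all of whose members are thick: if $A,B$ lie in a filter on $\thick$ then some thick set is $\subseteq^* A\cap B$, which forces $A\cap B$ itself to be thick, so the poset structure adds nothing beyond ``every member is thick'', and a $\thick$-ultrafilter is a \emph{maximal} free filter on $\N$ with all members thick. I will also use the order-reversing bijection $\F\leftrightarrow\F^*$ between free filters on $\N$ and nonempty closed subsets of $\N^*$ (so that $\F=\{A:\F^*\subseteq A^*\}$, and $\F\subsetneq\G$ iff $\G^*\subsetneq\F^*$), together with two elementary facts. \textbf{Fact A:} if $\F$ is a free filter on $\N$ with all members thick, then $\F^*$ contains a nonempty closed right ideal. \textbf{Fact B:} if $R$ is a nonempty right ideal with $R\subseteq A^*$, then $A$ is thick.

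Fact B is immediate: for $p\in R$ and $n\ge 0$ we have $\s^n(p)=p+n\in R\subseteq A^*$, so $A-n\in p$, so $\bigcap_{n\le k}(A-n)\in p$ is nonempty for every $k$. For Fact A, the family $\{A-n:A\in\F,\ n\ge 0\}$ has the finite intersection property, because the intersection of a finite subfamily contains $\bigcap_{n\le N}(A-n)=\{m:[m,m+N]\subseteq A\}$ --- where $A\in\F$ is the intersection of the sets involved and $N$ bounds the shifts --- and this set is nonempty since $A$ is thick; as the family also contains every cofinite set, it extends to a free ultrafilter $p$, and then $A\in\s^n(p)$ for all $A\in\F$ and all $n\ge 0$, so the subsystem $\closure{\{\s^n(p):n\ge 0\}}$ lies in $\F^*$ and is a closed right ideal by Proposition~\ref{prop:ideals}.

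Given these, the ``only if'' direction is short: if $\F^*$ is a minimal right ideal then, since $\F=\{A:\F^*\subseteq A^*\}$, Fact B shows every member of $\F$ is thick, so $\F$ is a $\thick$-filter; and were some $\thick$-filter $\G$ to strictly contain $\F$, then $\G^*\subsetneq\F^*$, while Fact A would place a nonempty right ideal inside $\G^*$, hence strictly inside $\F^*$, contradicting minimality. For ``if'', suppose $\F$ is a $\thick$-ultrafilter. The crucial step is to show $A-1\in\F$ whenever $A\in\F$: if $A-1\notin\F$, then by maximality the free filter generated by $\F\cup\{A-1\}$, which is $\{B:C\cap(A-1)\subseteq B\text{ for some }C\in\F\}$, cannot have all members thick, so $C\cap(A-1)$ is not thick for some $C\in\F$; but then $E:=C\cap A\in\F$ is thick while $E\cap(E-1)\subseteq C\cap(A-1)$ is not, which is impossible since $E\cap(E-1)$ contains an interval of length $L$ whenever $E$ contains one of length $L+1$. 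By Lemma~\ref{lem:thickfilters}, $\F^*$ is then a closed right ideal, hence a subsystem, hence contains a minimal right ideal $R$; writing $R=\G^*$ with $\G=\{A:R\subseteq A^*\}$, we get $\F\subseteq\G$ (as $R\subseteq\F^*\subseteq A^*$ for each $A\in\F$), and Fact B makes $\G$ a $\thick$-filter containing $\F$, so maximality forces $\F=\G$ and hence $\F^*=R$.

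I expect the main obstacle to be exactly the step showing that a $\thick$-ultrafilter is closed under $A\mapsto A-1$, as this is where the abstract maximality in the poset $\thick$ must be converted into the concrete shift condition of Lemma~\ref{lem:thickfilters}. The care required is that a $\thick$-filter is generally not closed under meets in $\thick$, and $A\cap B$ of two thick sets need not be thick, so one cannot argue pointwise but must pass through the generated filter and exploit the one easy closure property, namely that $E\cap(E-1)$ is thick whenever $E$ is. The rest is routine Stone-dual bookkeeping together with the finite-intersection estimate behind Fact A.
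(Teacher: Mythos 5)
Your proof is correct, and its core coincides with the paper's: the decisive step in both is the maximality-plus-interval argument showing that a $\thick$-ultrafilter is closed under the shift (you only establish $A-1\in\F$, which indeed suffices via Lemma~\ref{lem:thickfilters} and Proposition~\ref{prop:ideals}; the paper gets $A+1\in\F$ as well and hence full invariance), followed by locating a minimal right ideal inside $\F^*$ and invoking maximality once more. Where you diverge is in the two supporting ingredients. The paper imports the implication ``closed right ideal $\Rightarrow$ $\thick$-filter'' from Theorem 2.9(c) of \cite{BHM}, whereas you prove it directly (your Fact B, via $\s^n(p)=p+n\in A^*$ and the characterization of thickness through $\bigcap_{n\le k}(A-n)$); and for the ``only if'' direction the paper extends $\F$ to a $\thick$-ultrafilter by Zorn's Lemma and then applies its second paragraph to that extension, whereas you instead prove Fact A, constructing inside the dual set of \emph{any} all-thick filter a nonempty closed right ideal as the orbit closure of an ultrafilter containing all the shifted sets $A-n$. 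The trade-off: the paper's route is shorter on the page because it leans on the citation and on Zorn, while yours is self-contained and more explicitly dynamical, at the cost of carrying the Stone-duality bookkeeping (the bijection between closed sets and filters, and the fact that minimal right ideals are closed, which you use implicitly when writing $R=\G^*$, exactly as the paper does). Both assemble the biconditional in the same way, so this is a legitimate, slightly more elementary variant of the paper's argument rather than a different theorem.
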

\begin{proof}
If $\F^*$ is a closed right ideal, it follows directly from Theorem 2.9(c) in \cite{BHM} that $\F$ is a $\thick$-filter.

Let $\F$ be a $\thick$-ultrafilter and $A, B \in \F$. $A \cap B$ is thick, and it is clear that if there is an interval of length $k$ in $A \cap B$ then there is an interval of length $k-1$ in $(A+1) \cap B$ and in $(A-1) \cap B$. Thus $(A+1) \cap B$ and $(A-1) \cap B$ are also thick. By the maximality of $\thick$-ultrafilters, $A+1 \in \F$ and $A-1 \in \F$. By Lemma~\ref{lem:thickfilters}, $\F^* = \s(\F^*)$. Thus, by Proposition~\ref{prop:ideals}, if $\F$ is a $\thick$-ultrafilter then $\F^*$ is a closed right ideal.

An easy application of Zorn's Lemma shows that every $\thick$-filter can be extended to a $\thick$-ultrafilter. Together with the preceding two paragraphs and the fact that every minimal right ideal is closed, this proves the lemma.
\end{proof}

By Lemma~\ref{lem:thickultrafilters}, some minimal right ideal is a $P$-set if and only if some $\thick$-ultrafilter is also a $P$-filter. By Stone duality, a $P$-point exists if and only if some ultrafilter on $\Inf$ is also a $P$-filter. This is what we mean when we say that our main result is a translation to $\thick$ of a theorem in $\Inf$.

The first proof of the existence of $P$-points under MA, given by Booth in \cite{Bth} as a modification of Rudin's proof from CH in \cite{Rud}, uses $\mathfrak{t} = \continuum$ to build a $\continuum$-length chain in $\Inf$ such that the filter generated by this chain is both an ultrafilter and a $P$-filter.

To prove that it is possible to have $P$-sets that are minimal right ideals, we will do the same thing as Booth, only replacing $\Inf$ with $\thick$ and $\tower$ with $\tt$. The only real difference is that, instead of showing MA implies $\tt = \continuum$ directly, we will prove that $\tt = \tower$.

\begin{theorem}\label{thm:cardinals}
$\tt = \mathfrak{t}$.
\end{theorem}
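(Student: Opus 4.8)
The plan is to show $\tt = \tower$ by establishing both inequalities. The direction $\tt \geq \tower$ should be the routine one: given an unbounded chain $\mathcal{C}$ in $\thick$ of size $\tt$, I want to produce from it an unbounded chain in $\Inf$ of the same size. The natural idea is to map a thick set $A$ to something like $\bigcap_{n \leq k}(A+n)$ for a suitable $k$, or more simply to exploit the fact that a chain in $\thick$ is in particular a $\sub^*$-chain of infinite sets, hence already a chain in $\Inf$. The only thing to check is that if such a chain has no lower bound among \emph{thick} sets, it has no lower bound among \emph{infinite} sets either; but if $B$ were an infinite pseudo-intersection of the chain, then since the chain members are thick and $\sub^*$-decreasing, one expects the ``interval-thickening'' of $B$ — a set like $\bigcup_{b \in B}[b, b+h(b))$ for a slowly growing $h$ — to be thick and still almost-contained in every chain member, contradicting unboundedness in $\thick$. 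Making the growth rate $h$ work uniformly against the whole chain is where a small amount of care is needed, but this is the kind of bookkeeping argument that goes through.

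For the harder direction $\tt \leq \tower$, the plan is to take an unbounded chain $\mathcal{C} = \seq{A_\a}{\a < \tower}$ in $\Inf$ and transform it into an unbounded chain in $\thick$ of size $\tower$. The obvious candidate transformation is to send each $A_\a$ to a thick set built from it — for instance $T_\a = \bigcup_{n \in A_\a} [s_n, s_{n+1})$ where $\seq{s_n}{n \in \N}$ enumerates a fixed thick set in increasing order with the $n$-th ``block'' $[s_n, s_{n+1})$ having length $n$ (or any unbounded lengths). Then $A_\a \sub^* A_\b$ should give $T_\a \sub^* T_\b$, so $\seq{T_\a}{\a < \tower}$ is a chain in $\thick$; and each $T_\a$ is thick because $A_\a$ is infinite, so it contains blocks of unbounded length. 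The crux is to verify this chain is \emph{unbounded} in $\thick$: if $S$ were a thick pseudo-intersection of all the $T_\a$, I would need to extract from $S$ an infinite pseudo-intersection of the original chain $\mathcal{C}$, contradicting its unboundedness. Here one sets $B = \set{n \in \N}{[s_n,s_{n+1}) \sub S}$ or, more robustly, $B = \set{n}{S \cap [s_n,s_{n+1}) \neq \0}$, and argues that $S \sub^* T_\a$ forces $B \sub^* A_\a$ up to controlling the finitely many blocks where $S$ pokes outside $T_\a$, while $S$ being thick forces $B$ to be infinite — a long interval in $S$ must meet (indeed contain) many consecutive blocks, since block lengths are bounded below on any tail.

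The main obstacle, and the step I would spend the most effort on, is exactly this last extraction: deducing that the ``trace'' set $B$ is infinite from the thickness of $S$, and simultaneously that $B$ is a genuine pseudo-intersection of $\mathcal{C}$. The tension is that thickness is a statement about long intervals while pseudo-intersection is a statement about almost-containment, and the block decomposition has to mediate between them cleanly; in particular the blocks must be short enough (lengths growing slowly, or at least the gaps controlled) that a long interval in $S$ is guaranteed to swallow many whole blocks, yet the construction must not accidentally make $B$ finite or non-pseudo-intersecting because of boundary effects at the block edges. Once the block sizes are chosen correctly — I expect something like block $n$ having length roughly $\log n$, or blocks arranged so that the $k$-th interval-length appears infinitely often — both implications should fall out, and combining the two inequalities with Theorem~\ref{thm:cardinals}'s statement (and the already-noted fact that $\tt$ is the $\thick$-analogue of $\tower$) completes the proof. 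It is also worth checking at the end that the argument is symmetric enough that no use of $\pin$, $\mathfrak{b}$, or any extra hypothesis sneaks in — the whole point is that this is a ZFC equality.
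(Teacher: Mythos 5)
The difficulty in this theorem is distributed exactly opposite to how you have it, and the direction you call routine, $\tower \leq \tt$, is where your sketch has a genuine gap. If $B$ is an infinite pseudo-intersection (in $\Inf$) of a chain $\set{T_\a}{\a<\k}$ in $\thick$, then $B \sub^* T_\a$ only says that almost every $b \in B$ lies in $T_\a$; it gives no control whatsoever over whether $[b,b+h(b))$ lies in $T_\a$, so the thickened set $\bigcup_{b \in B}[b,b+h(b))$ need not be almost contained in any chain member, no matter how slowly $h$ grows. To guarantee even that, for each fixed $m$, almost every $b\in B$ satisfies $[b,b+m] \sub T_\a$, you must take $B$ to be a pseudo-intersection of the shifted family $\set{T_\a - n}{\a<\k,\ n \in \N}$, which is a filter base but no longer a chain --- so this route naturally yields $\pin \leq \tt$ rather than $\tower \leq \tt$. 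Even then, how much thickening is needed around the points of $B$ depends on $\a$, so one must dominate $\k$ many functions $f_\a$ at once, i.e.\ use $\k < \mathfrak{b}$ (together with $\pin \leq \mathfrak{b}$). This is precisely what the paper does: it proves $\pin \leq \tt \leq \tower$ and then invokes the Malliaris--Shelah theorem $\pin = \tower$; the author explicitly leaves open whether an ``elementary'' proof avoiding that theorem exists. So your closing remark is backwards: using $\pin$, $\mathfrak{b}$, and Malliaris--Shelah does not compromise the ZFC status of the equality, whereas the direct chain-to-chain argument you hope to carry out by ``bookkeeping'' is exactly the open problem, and as sketched it does not go through.

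Your other direction, $\tt \leq \tower$, is correct and is essentially the paper's argument: with disjoint blocks $I_n$ (the paper takes $I_n = [2^n,2^{n+1}-1]$) and $T_\a = \bigcup_{n \in A_\a} I_n$, any thick (indeed any infinite) bound $S$ of the chain $\set{T_\a}{\a<\k}$ gives the trace $B = \set{n}{S \cap I_n \neq \0}$, which is infinite because $S$ is infinite and each block is finite, and satisfies $B \sub^* A_\a$ because the finitely many points of $S$ outside $T_\a$ meet only finitely many blocks. None of the worries about block lengths or long intervals swallowing whole blocks is needed here. But since the half of the equality you dismissed as routine is not established, the proposal as a whole does not prove the theorem.
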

\begin{proof}
We will prove that $\mathfrak{p} \leq \tt \leq \mathfrak{t}$. This is enough because, by a deep new result of Malliaris and Shelah, $\mathfrak{p} = \mathfrak{t}$ (see \cite{M&S}).

To prove $\tt \leq \mathfrak{t}$ we make use of the standard trick: supposing $\k < \tt$, we will show $\k < \mathfrak{t}$. Suppose $\k < \tt$ and let $T = \set{A_\a}{\a < \k}$ be a chain in $\Inf$. For each $n$, let $I_n = [2^n,2^{n+1}-1]$. For each $\a < \k$, let $B_\a = \bigcup_{n \in A}I_n$. Clearly each $B_\a$ is thick and $\set{B_\a}{\a < \k}$ is a chain in $\thick$. Since $\k < \tt$, there is some $B_\k$ such that $B_\k \sub^* B_\a$ for all $\a < \k$. Setting $A_\k = \set{n}{B_\k \cap I_n \neq \0}$, it is easily checked that $A_\k \sub^* A_\a$ for all $\a < \k$. Since $T$ was arbitrary, $\k < \mathfrak{t}$.

We use the same trick to prove $\pin \leq \tt$. Suppose $\k < \mathfrak{p}$ and let $\set{T_\a}{\a < \k}$ be a chain in $\thick$; we will show that there is some $T_\k \in \thick$ such that $T_\k \sub^* T_\a$ for every $\a$.

Let $\mathcal T = \set{T_\a - n}{\a < \k, n \in \N}$. We claim that $\mathcal T$ is a filter on $\Inf$. Fix $\a_0,\dots,\a_m$ and $n_0,\dots,n_m$. There is a thick set $T \sub \bigcap_{k \leq m}T_{\a_k}$; in fact, some cofinite subset of $T_{\max \{\a_0,\dots,\a_n\}}$ will do. Let $N = \max \{n_0,\dots,n_m\}$. Because $T$ is thick, there is an infinite $T' \sub T$ such that for every $k \in T'$ we have $[k,k+N] \sub T$. But if $[k,k+N] \sub T$ then $k \in \bigcap_{k \leq m}(T_{\a_k} - n_k)$; thus $\bigcap_{k \leq m}(T_{\a_k} - n_k)$ is infinite.

Hence $\mathcal T$ is a filter on $\Inf$. Since $\card{\mathcal T} = \k < \mathfrak{p}$, $\mathcal T$ has a lower bound $A$ in $\Inf$. Let $\set{a_n}{n \in \N}$ be an increasing enumeration of $A$.

For each $\a < \k$ and $m \in \N$, there is some minimal $\ell \in \N$ such that
$$\set{a_n}{n \geq \ell} \sub \bigcap_{k \leq m}(T_\a - k).$$
Let $f_\a(m) = \ell$. Equivalently, $f_\a(m)$ is the least element of $\N$ such that if $n \geq f_\a(m)$ then $[a_n,a_n+m] \sub T_\a$. It is well-known that $\mathfrak{p} \leq \mathfrak{b}$ (see, e.g., \cite{EvD}). Therefore $\k < \mathfrak{b}$ and there is some $f: \N \to \N$ such that, for all $\a < \k$, $f_\a <^* f$. Let
$$T_\k = \bigcup_{n \in \N}[a_{f(n)},a_{f(n)}+n].$$
Clearly $T_\k \in \thick$. Fix $\a < \k$. There is some $N \in \N$ such that $f_\a(n) < f(n)$ for $n \geq N$. But if $f_\a(n) < f(n)$ then $[a_{f(n)},a_{f(n)}+n] \sub T_\a$. Thus $T_\k \sub^* T_\a$. Hence the chain $\set{T_\a}{\a < \k}$ is not unbounded in $\thick$, which shows $\k < \tt$.
\end{proof}

We leave it an open question whether there is an ``elementary'' proof that $\tt = \tower$, i.e., one that avoids the power of Malliaris and Shelah's equality. We also leave as an exercise the (easy) proof that MA can be used directly to show $\tt = \continuum$.

\begin{theorem}\label{thm:main}
If $\tower = \continuum$ then there is a minimal right ideal in $\N^*$ that is also a $P$-set.
\end{theorem}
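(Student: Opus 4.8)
The plan is to carry out, inside the partial order $\thick$, the classical construction of Booth \cite{Bth} that produces a $P$-point from the assumption $\tower = \continuum$. By Lemma~\ref{lem:thickultrafilters} together with the definition of a $P$-set, it suffices to build a $\thick$-ultrafilter $\F$ that is also a $P$-filter: a maximal filter on $(\thick,\sub^*)$ in which every countable subfamily has a lower bound. The hypothesis is used only through Theorem~\ref{thm:cardinals}, which converts $\tower = \continuum$ into $\tt = \continuum$, and this is exactly the ingredient that lets the recursion below run through all $\continuum$ steps.

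Before starting, I would isolate the one combinatorial fact that drives maximality: if $S$ and $T$ are thick, then some $\thick$-filter contains both $S$ and $T$ if and only if $S\cap T$ is thick. The nontrivial implication uses two elementary observations --- a thick set stays thick after deleting finitely many points, and thickness is $\sub$-upward closed --- which together give that whenever a thick set is contained mod finite in a set $R$, the set $R$ is itself thick; hence the common lower bound forced by the finite intersection property of a $\thick$-filter is thick, so $S\cap T$ is thick. Restated: if $S$ lies in a $\thick$-filter $\F$ and $T$ is thick with $S\cap T$ not thick, then no $\thick$-filter contains $\F\cup\{T\}$. This is the $\thick$-analogue of the remark that two infinite sets with finite intersection cannot belong to a common free filter.

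Now fix an enumeration $\seq{T_\a}{\a<\continuum}$ of all thick subsets of $\N$ and recursively build a $\sub^*$-decreasing sequence $\seq{S_\a}{\a<\continuum}$ of thick sets: set $S_0=\N$; at a successor step put $S_{\a+1}=S_\a\cap T_\a$ if that set is thick, and $S_{\a+1}=S_\a$ otherwise; at a limit step $\lambda$ the chain $\seq{S_\a}{\a<\lambda}$ has size $\lambda<\continuum=\tt$, so it is not unbounded in $\thick$ and we may choose a thick lower bound $S_\lambda$ for it. Let $\F$ be the $\thick$-filter generated by $\set{S_\a}{\a<\continuum}$; it is a proper filter because two disjoint thick sets (which plainly exist) cannot both lie in $\F$, as some element of the chain would then be $\sub^*$-below their empty intersection and hence finite. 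Maximality is now immediate: for a thick $T=T_\a$, either $S_\a\cap T_\a$ is thick, whence $S_{\a+1}\sub^* T_\a$ and $T\in\F$; or $S_\a\cap T_\a$ is not thick, in which case, since $S_\a\in\F$, the combinatorial fact rules out any $\thick$-filter containing $\F\cup\{T\}$. Finally $\F$ is a $P$-filter: given $\set{B_n}{n\in\N}\sub\F$, choose $\a_n$ with $S_{\a_n}\sub^* B_n$; since $\continuum$ has uncountable cofinality, $\b:=\sup_n\a_n<\continuum$, and then $S_\b\in\F$ satisfies $S_\b\sub^* S_{\a_n}\sub^* B_n$ for every $n$. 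By Lemma~\ref{lem:thickultrafilters}, $\F^*$ is a minimal right ideal of $\N^*$, and by construction it is a $P$-set.

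The genuine content is Theorem~\ref{thm:cardinals}, which we are free to use; granting it, the argument is a routine transcription of Booth's proof, and the $P$-filter clause in particular is automatic from the chain structure together with $\mathrm{cf}(\continuum)>\w$. The only step needing real attention is the verification of maximality --- that the recursion really ``decides'' every thick set --- and this rests on the combinatorial dichotomy for intersections of thick sets, which here plays the role that the infinite/coinfinite split plays in the original construction in $\Inf$.
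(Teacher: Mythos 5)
Your proposal is correct and follows essentially the same route as the paper: the Booth-style transfinite recursion through an enumeration of $\thick$, using Theorem~\ref{thm:cardinals} to convert $\tower=\continuum$ into $\tt=\continuum$ at limit stages, with maximality from the successor steps and the $P$-filter property from $\mathrm{cf}(\continuum)>\w$. The only difference is presentational: you make explicit the thick-intersection dichotomy and the properness of the generated filter, which the paper's proof leaves implicit.
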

\begin{proof}
Using Lemma~\ref{lem:thickultrafilters}, it suffices to construct a $\thick$-ultrafilter that is also a $P$-filter.

Fix an enumeration $\set{A_\a}{\a < \continuum}$ of $\thick$. We construct a (reverse well ordered) chain $T = \set{X_\a}{\a < \continuum}$ in $\thick$ as follows. Set $X_0 = \N$. If $X_\a$ has already been defined, let $X_{\a+1} = X_\a \cap A_\a$ if $X_\a \cap A_\a \in \thick$, and otherwise let $X_{\a+1} = X_\a$. For limit $\a$, let $X_\a$ be any lower bound in $(\thick,\sub^*)$ of $\set{X_\b}{\b < \a}$; a lower bound exists because $\a < \tt$.

The chain $T$ constructed in the previous paragraph is a filter base; let $\F$ be the filter generated by $T$. If $A_\a \in \thick$, then either $A_\a \sub X_{\a+1}$, which implies $A_\a \in \F$, or $A_\a \cap X_\a$ is not thick. Thus $\F$ is a $\thick$-ultrafilter. If $\mathcal A$ is a countable subset of $\F$ then, shrinking the elements of $\mathcal A$ if necessary, we may assume $\mathcal A \sub T$ and write $\mathcal A = \set{X_{\a_n}}{n \in \w}$. By K\"{o}nig's Lemma, there is some $\a < \continuum$ with $\a_n < \a$ for all $n$. Then $X_\a$ is a pseudo-intersection for $\mathcal A$ in $\F$, so that $\F$ is a $P$-filter. 
\end{proof}

\begin{remark}
The minimal right ideal constructed in the previous proof is actually a $P_{\mathrm{cf}(\continuum)}$-set. Thus, for example, MA implies that some minimal right ideal is a $P_\continuum$-set.
\end{remark}




As remarked in the final section of \cite{Bls}, the elements of minimal right ideals, specifically the idempotent ones, are never $P$-points. In fact, if $I$ is a minimal right ideal and $p \in I$, then $\set{\s^n(p)}{n \in \N}$ is dense in $I$ by Proposition~\ref{prop:ideals}; since the minimal right ideals are separable, no point of one is even a weak $P$-point. Our result states that (sometimes, in some models) this is the only reason a minimal ultrafilter fails to be a $P$-point.

\section{More on $P$-set right ideals}\label{sec:independence}

We begin this section by proving that the conclusion of Theorem~\ref{thm:main} is consistently false.

\begin{lemma}\label{lem:ptop}
If $f: \N \to \N$ is finite-to-one, then $\b f$ maps closed $P$-sets to closed $P$-sets.
\end{lemma}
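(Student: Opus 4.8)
The plan is to translate everything to filters via Stone duality and use the internal characterization of $P$-filters recalled above, namely that a free filter $\F$ is a $P$-filter if and only if every countable subfamily of $\F$ has a $\sub^*$-lower bound in $\F$. So let $X \sub \N^*$ be a closed $P$-set and let $\F$ be the (free, $P$-)filter with $\F^* = X$. First I would identify the filter that corresponds to $\b f(X)$. Since $\b f$ is continuous with $\b f^{-1}(A^*) = (f^{-1}(A))^*$ for every $A \sub \N$, we get $\b f(X) \sub A^*$ iff $X \sub (f^{-1}(A))^*$ iff $f^{-1}(A) \in \F$; hence $\b f(X) = \G^*$, where $\G = \set{A \sub \N}{f^{-1}(A) \in \F}$. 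It is routine to check that $\G$ is a filter, and that it is free because $f$ is finite-to-one (so preimages of cofinite sets are cofinite); likewise $\b f(X)$ is compact, hence closed, and is contained in $\N^*$ since $\b f(\N^*) \sub \N^*$. It therefore remains only to show that $\G$ is a $P$-filter.

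Given a countable family $\set{B_n}{n \in \w} \sub \G$, we have $f^{-1}(B_n) \in \F$ for each $n$, so by the $P$-filter property of $\F$ there is $C \in \F$ with $C \sub^* f^{-1}(B_n)$ for every $n$. The candidate lower bound in $\G$ will be $f(C)$: since $C \sub f^{-1}(f(C))$ and $\F$ is upwards hereditary, $f^{-1}(f(C)) \in \F$, so $f(C) \in \G$; and since any $y \in f(C) \setminus B_n$ has the form $f(c)$ with $c \in C \setminus f^{-1}(B_n)$, we get $f(C) \setminus B_n \sub f\bigl(C \setminus f^{-1}(B_n)\bigr)$, which is finite as the image of a finite set. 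Thus $f(C) \sub^* B_n$ for all $n$, witnessing that $\G$ is a $P$-filter; by Stone duality, $\b f(X) = \G^*$ is then a closed $P$-set.

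The argument is short, and its only real content is the passage from a witness $C$ for $\F$ to a witness $f(C)$ for $\G$. That is the step to watch, and it is exactly where the hypothesis is essential: finite-to-one-ness is needed both to know that $\G$ is free (hence that $\b f(X) \sub \N^*$) and, more to the point, to know that $f\bigl(C \setminus f^{-1}(B_n)\bigr)$ is finite, so that $f(C) \sub^* B_n$. For a general $f$ this set can be infinite and the whole argument collapses.
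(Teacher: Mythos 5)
Your proof is correct and takes essentially the same route as the paper: you identify the same filter $\G = \set{A \sub \N}{f^{-1}(A) \in \F}$ and push a pseudo-intersection $C$ of the preimages forward to the witness $f(C)$, which is exactly the paper's witness (written there as $\set{n}{C \cap f^{-1}(n) \neq \0}$). One quibble with your closing commentary only: finite-to-one-ness is \emph{not} what makes $f\bigl(C \setminus f^{-1}(B_n)\bigr)$ finite --- the image of a finite set under any map is finite, as your own proof correctly notes --- its real role is just to make $\G$ free, i.e.\ to keep $\b f(X)$ inside $\N^*$.
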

\begin{proof}
This lemma is a well-known bit of folklore, but we will give the short proof here for completeness.

Let $\F$ be a filter with $\F^*$ a closed $P$-set, and let $\G$ denote the unique filter such that $\b f(\F^*) = \G^*$. One can easily check that
$$\G = \set{A \sub \N}{f^{-1}(A) \in \F}.$$
Let $\mathcal A \sub \G$ be countable. Since $\F$ is a $P$-filter, $\F$ contains a pseudo-intersection $B$ for $\set{f^{-1}(A)}{A \in \mathcal A}$. $\b f(B) = \set{n}{A_0 \cap f^{-1}(n) \neq \0}$ is a pseudo-intersection for $\mathcal A$, so $\G$ is a $P$-filter as well.
\end{proof}

\begin{lemma}\label{lem:finitetoone}
There is a finite-to-one map $f: \N \to \N$ such that if $I$ is a minimal right ideal then $\b f(I)$ is a single point. Furthermore, for every $p \in \N^*$ there is some minimal right ideal $I$ such that $\b f(I) = p$.
\end{lemma}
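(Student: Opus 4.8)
The plan is to take $f$ to be the map collapsing a partition of $\N$ into consecutive intervals whose lengths tend to infinity. Fix intervals $\set{I_n}{n \in \N}$ with $\N = \bigcup_n I_n$, $\min I_{n+1} = \max I_n + 1$, and $\card{I_n} \to \infty$, and define $f \colon \N \to \N$ by declaring $f \rest I_n$ to be constantly $n$; then $f$ is finite-to-one. By Lemma~\ref{lem:thickultrafilters} every minimal right ideal has the form $\F^*$ for some $\thick$-ultrafilter $\F$, and by the computation in the proof of Lemma~\ref{lem:ptop} we have $\b f(\F^*) = \G^*$, where $\G = \set{A \sub \N}{f^{-1}(A) \in \F}$. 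Since $\F$ contains every cofinite set and no finite set, $\G$ is a free filter, so the first assertion reduces to showing that $\G$ is an \emph{ultra}filter, i.e.\ that for every $A \sub \N$ at least one of $f^{-1}(A) = \bigcup_{n \in A} I_n$ and $f^{-1}(\N \setminus A) = \bigcup_{n \notin A} I_n$ lies in $\F$.

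The heart of the argument is the following combinatorial claim: if $S \sub \N$ is thick and $A \sub \N$ is arbitrary, then at least one of $S \cap f^{-1}(A)$ and $S \cap f^{-1}(\N \setminus A)$ is thick. To prove it I would use that $\card{I_n} \to \infty$, so that for each $t$ all but finitely many $I_n$ have length $\geq t$; hence the $I_n$ of length $< t$ are confined to a bounded initial segment of $\N$. Given $k$, thickness of $S$ lets us find an interval $J \sub S$ of length $k$ with left endpoint large enough that every $I_n$ meeting $J$ has length at least $\sqrt{k}$. Then $J$ meets at most $\sqrt{k}+2$ of the $I_n$, and these partition $J$ into subintervals, each contained entirely in $f^{-1}(A)$ or entirely in $f^{-1}(\N \setminus A)$; the longest such piece has length at least $k/(\sqrt{k}+2)$, and $k/(\sqrt{k}+2) \to \infty$. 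Running over $k$ and passing to an infinite set of $k$ for which the longest piece always lands on the same side, we obtain arbitrarily long intervals inside $S \cap f^{-1}(A)$, or else inside $S \cap f^{-1}(\N \setminus A)$, which proves the claim.

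Granting the claim, the first assertion follows from maximality of $\F$ together with the (easily checked) fact that a thick set $T$ belongs to $\F$ as soon as $T \cap S$ is thick for every $S \in \F$. Indeed, if neither $f^{-1}(A)$ nor $f^{-1}(\N \setminus A)$ were in $\F$, there would be $S_1, S_2 \in \F$ with $S_1 \cap f^{-1}(A)$ and $S_2 \cap f^{-1}(\N \setminus A)$ both non-thick; choosing $S \in \F$ with $S \sub^* S_1$ and $S \sub^* S_2$ makes both $S \cap f^{-1}(A)$ and $S \cap f^{-1}(\N \setminus A)$ non-thick, contradicting the claim. Hence $\G$ is an ultrafilter and $\b f(\F^*) = \G^*$ is a single point.

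For the second assertion, fix $p \in \N^*$. Since $\card{I_n} \to \infty$, $f^{-1}(A) = \bigcup_{n \in A} I_n$ is thick whenever $A$ is infinite, so $\set{f^{-1}(A)}{A \in p}$ is a filter base contained in $\thick$; extend the $\thick$-filter it generates to a $\thick$-ultrafilter $\F$ by Zorn's Lemma. Then $I := \F^*$ is a minimal right ideal, and by the first part $\b f(I) = \{q\}$, where $q = \set{A \sub \N}{f^{-1}(A) \in \F}$. Since $f^{-1}(A) \in \F$ for every $A \in p$, we have $p \sub q$, so $p = q$ because both are ultrafilters; thus $\b f(I) = p$. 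I expect the combinatorial claim to be the only real obstacle: it is precisely where $\card{I_n} \to \infty$ is used, and that hypothesis is essential, since for intervals of bounded length one can split a thick set into two non-thick unions of blocks (for instance taking $A$ to be the even indices), so the growth of the blocks is exactly what forces every sufficiently long interval to be dominated by a single $I_n$.
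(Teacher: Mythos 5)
Your proposal is correct and follows essentially the same route as the paper: the same block-collapsing map $f$ with block lengths tending to infinity, the same reduction (via the formula $\b f(\F) = \set{A}{f^{-1}(A) \in \F}$ and maximality of $\thick$-ultrafilters) to a combinatorial statement about thick sets, and the same construction of a preimage ideal for the second claim. The only difference is cosmetic: you prove the dichotomy positively by a pigeonhole count of blocks meeting a long interval, while the paper argues contrapositively that the union of the two non-thick pieces would contain no interval longer than $2k$ past the point where blocks exceed length $k$; these are the same idea.
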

\begin{proof}
Let $\set{a_n}{n \in \N}$ be an increasing sequence of natural numbers such that $\lim_{n \to \infty}(a_{n+1}-a_n) = \infty$, and let $f$ be the map that sends $m$ to the greatest $n$ such that $a_n \leq m$. We will show that $f$ has the required property.

For each $n$, let $I_n = f^{-1}(n)$. Let $\F$ be a $\thick$-ultrafilter. Letting
$$\b f (\F) = \set{A \sub \N}{f^{-1}(A) \in \F}$$
as in the proof of Lemma~\ref{lem:ptop}, it suffices to check that $\b f(\F)$ is an ultrafilter on $\N$. We will show that for every $A \sub \N$ either $f^{-1}(A) \in \F$ or $f^{-1}(\N \setminus A) \in \F$.

Suppose this is not the case, and let $A$ be such that neither $f^{-1}(A)$ nor $f^{-1}(\N \setminus A)$ is in $\F$. Since $\F$ is a $\thick$-ultrafilter, this means that there are $B,C \in \F$ such that neither $f^{-1}(A) \cap B$ nor $f^{-1}(\N \setminus A) \cap C$ is thick. $\F$ is a filter, so $B \cap C \in \F$, and neither $D = B \cap C \cap f^{-1}(A)$ nor $E = B \cap C \cap f^{-1}(\N \setminus A)$ is thick. Fix $k$ such that neither $D$ nor $E$ contains an interval of length $\geq k$. There is some $M$ such that if $n > M$ then $a_{n+1} - a_n > k$. If $I$ is an interval in $D \cup E$ with $\min I \geq a_M$, $I$ has length at most $2k$ (otherwise either $D$ or $E$ contains an interval of length greater than $k$). Thus $D \cup E$ is not thick. Since
$$D \cup E = (B \cap C \cap f^{-1}(A)) \cup (B \cap C \cap f^{-1}(\N \setminus A)) = B \cap C$$
and $B \cap C \in \F$, this is a contradiction.

For the second claim, observe that for any $p \in \N^*$,
$$\set{\bigcup_{n \in A}[a_n,a_{n+1}-1]}{A \in p}$$
is a thick filter. If $\F$ is a $\thick$-ultrafilter extending this filter, then $\F^*$ is the required minimal right ideal.
\end{proof}

We remark that Blass and Hindman in \cite{B&H} have proved a similar result. They show that if MA holds, then the (finite-to-one) map $f$ on $\N$ that sends $n$ to the leftmost $1$ in its binary expansion has the property that $\b f$ sends an idempotent ultrafilter to a Ramsey ultrafilter. Our lemma shows that if $p$ is any ultrafilter, then this map (or one of many others like it) will send some minimal idempotent to $p$.

\begin{theorem}\label{thm:noppoints}
It is consistent with \emph{ZFC} that no minimal right ideal is a $P$-set.
\end{theorem}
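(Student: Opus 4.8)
The plan is to combine the two preceding lemmas with the well-known consistency result that there are no $P$-points. First recall Kunen's theorem (or the standard fact) that it is consistent with ZFC that $P$-points do not exist; the iterated Silver-forcing model, or any model obtained by Shelah's construction, witnesses this. I would simply invoke this as a black box, citing the appropriate source. Our task, then, is to argue that in such a model no minimal right ideal can be a $P$-set.

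The argument is a short contradiction. Suppose, in a model with no $P$-points, that $I$ is a minimal right ideal of $(\b\N,+)$ that is also a $P$-set. By Lemma~\ref{lem:finitetoone} there is a finite-to-one map $f: \N \to \N$ such that $\b f(I)$ is a single point $p \in \N^*$. By Lemma~\ref{lem:ptop}, since $f$ is finite-to-one and $I$ is a closed $P$-set, $\b f(I)$ is again a closed $P$-set; but a closed $P$-set consisting of a single point is precisely a $P$-point, so $p$ is a $P$-point. This contradicts our assumption that no $P$-points exist. Hence in this model no minimal right ideal is a $P$-set, establishing the consistency statement.

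There is essentially no obstacle here beyond correctly quoting the consistency of ``no $P$-points.'' The only point requiring the tiniest bit of care is the observation that $\b f(I)$ is a single point and not merely a small set: this is exactly the first conclusion of Lemma~\ref{lem:finitetoone}, so the two lemmas were designed precisely to make this step immediate. One should also note that the hypothesis of Lemma~\ref{lem:ptop} is met because a minimal right ideal is closed (Proposition~\ref{prop:ideals}, or the remark that all minimal right ideals are closed), so ``$P$-set'' applies to $I$ in the sense of the definition. With these pieces in place the theorem follows in one line, so the proof will be very brief.
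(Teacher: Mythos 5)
Your argument is exactly the paper's proof: combine Lemma~\ref{lem:ptop} and Lemma~\ref{lem:finitetoone} to show that a $P$-set minimal right ideal yields a $P$-point, then invoke the consistency of ``no $P$-points.'' The only quibble is attribution: the no-$P$-points model is due to Shelah (not Kunen, who proved that weak $P$-points \emph{do} exist in ZFC), so cite Shelah/Wimmers as the paper does.
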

\begin{proof}
By Lemmas \ref{lem:ptop} and \ref{lem:finitetoone}, if some minimal right ideal is a $P$-set then there is a $P$-point in $\N^*$. By a famous result of Shelah, it is consistent with ZFC that $\N^*$ has no $P$-points (see \cite{Wim} or \cite{Sh2}).
\end{proof}

The proof of Theorem \ref{thm:noppoints} suggests the following problem:

\begin{question}
Is it consistent that $\N^*$ has a $P$-point but that no minimal right ideal is a $P$-set?
\end{question}

Recall that $p$ is \textbf{idempotent} if and only if $p+p = p$. The idempotents of $\N^*$ admit a natural partial order as follows: $p$ and $q$ are idempotent ultrafilters, then $p \leq q$ if and only if $p+q = q+p = p$. It is known (see, e.g., \cite{Brg}, Theorem 2.7) that an idempotent $p$ is minimal with respect to this order if and only if $p$ belongs to some minimal right ideal. Such ultrafilters are called \textbf{minimal idempotents}. It is also known (\cite{Brg} again) that if $q$ is any idempotent then there is a minimal idempotent $p$ with $p \leq q$.

We now give three corollaries of our Theorem~\ref{thm:main}:

\begin{corollary}\label{cor:twoquestions}
Suppose $\tower = \continuum$. Then 
\begin{enumerate}
\item the minimal right ideals are not homeomorphically embedded in $\N^*$.
\item there is a minimal right ideal $I$ such that, for any $p,q \in \b\N$, $p+q \in I$ if and only if $p \in I$.
\item there is a (necessarily minimal) idempotent $p$ that does not compare with any other idempotents under $\leq$.
\end{enumerate}
\end{corollary}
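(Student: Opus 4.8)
The plan is to extract all three parts from the single object furnished by Theorem~\ref{thm:main}: using $\tower=\continuum$, fix once and for all a minimal right ideal $I=\F^*$ that is a $P$-set, so that $\F$ is both a $\thick$-ultrafilter (Lemma~\ref{lem:thickultrafilters}) and a $P$-filter. The one tool I would isolate at the outset — and essentially the only place the hypothesis does any work — is the standard observation that a $P$-set avoids the closure of every countable set disjoint from it: \emph{if $C\sub\N^*$ is countable and $C\cap I=\0$, then $\closure{C}\cap I=\0$.} To see this, for each $x\in C$ choose $A_x\in\F$ with $A_x\notin x$; since $\F$ is a $P$-filter there is a single $A\in\F$ with $A\sub^*A_x$ for all $x\in C$, hence $A\notin x$ for every $x\in C$, so $C\sub(\N\setminus A)^*$ and therefore $\closure{C}\sub(\N\setminus A)^*$, a set disjoint from $A^*\supseteq\F^*=I$.

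Part (2) follows quickly, with $I$ as above. The forward direction is trivial: $I$ is a right ideal, so $p\in I$ gives $p+q\in I$. For the converse, suppose $p\notin I$. By Lemmas~\ref{lem:thickfilters} and~\ref{lem:thickultrafilters} we have $\s(I)=I$, and since $\s$ restricts to a homeomorphism of $\N^*$ this gives $\s^n(I)=I$ for all $n\in\Z$, whence $\s^n(p)\notin I$ for every $n\in\N$. Thus $\set{\s^n(p)}{n\in\N}$ is a countable set disjoint from $I$, so by the observation its closure is disjoint from $I$; since $p+\b\N=\closure{\set{\s^n(p)}{n\in\N}}$, we conclude $p+q\notin I$ for every $q\in\b\N$.

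For part (3), pick an idempotent $p\in I$ (a minimal right ideal is a nonempty compact subsemigroup of $\b\N$, hence contains an idempotent); since $p$ lies in a minimal right ideal it is a minimal idempotent. Suppose $q$ is an idempotent comparable to $p$. If $q\leq p$, then $q=p$ by minimality of $p$. If $p\leq q$, then $q+p=p$, so $q+p=p\in I$, and part~(2) forces $q\in I$; then $q$ too is a minimal idempotent, and $p\leq q$ with $q$ minimal gives $p=q$. Hence $p$ is incomparable with every idempotent other than itself, and in particular it is minimal.

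Finally, part (1): I read this as the statement that the minimal right ideals — which are pairwise homeomorphic, as is standard — are not all embedded in $\N^*$ in a topologically equivalent fashion. By Theorem~\ref{thm:main} one of them is a $P$-set, while on the other hand, if $f$ is the finite-to-one map of Lemma~\ref{lem:finitetoone} and $p$ is any non-$P$-point (these exist in ZFC), then the minimal right ideal $J$ with $\b f(J)=\{p\}$ (Lemma~\ref{lem:finitetoone}) is not a $P$-set, since otherwise $\{p\}=\b f(J)$ would be a $P$-set by Lemma~\ref{lem:ptop}. As being a $P$-set is preserved by autohomeomorphisms of $\N^*$, no autohomeomorphism of $\N^*$ carries $J$ onto the ideal of Theorem~\ref{thm:main}, so $\mathrm{Homeo}(\N^*)$ does not act transitively on the minimal right ideals. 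The step I expect to require the most care is the second case of part (3): a priori the idempotent $q$ with $p\leq q$ need not lie in $I$, and it is precisely the $P$-set property of $I$, routed through part (2), that confines it there — everything else is bookkeeping. A lesser point is to phrase part (1) as the honest non-transitivity assertion rather than as something vacuous or ZFC-trivial.
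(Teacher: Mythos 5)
Your proof is correct, and parts (2) and (3) follow essentially the paper's own argument: same use of $\s$-invariance of the ideal, same reduction of (3) to (2) via $q+p=p\in I$, with the only difference being that you spell out explicitly the lemma the paper leaves implicit (a countable set disjoint from a $P$-set has closure disjoint from it) and that you handle the case $q\leq p$ explicitly where the paper leaves it to minimality of $p$. Part (1) is where you genuinely diverge. The paper produces its non-$P$-set minimal right ideal by citing the Hindman--Strauss fact that the union $K$ of all minimal right ideals is nowhere dense (via the piecewise-syndetic characterization of $\closure{K}$), picking $p\notin K$, and taking a minimal right ideal inside $\closure{\set{\s^n(p)}{n\in\N}}$; that ideal lies in the closure of a countable set disjoint from it, so it is not even a weak $P$-set. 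You instead route through Lemmas~\ref{lem:ptop} and~\ref{lem:finitetoone}: take a non-$P$-point $p$ (which exists in ZFC, e.g.\ any point in the closure of a countable discrete subset of $\N^*$ not in that subset) and a minimal right ideal $J$ with $\b f(J)=\{p\}$; if $J$ were a $P$-set then $p$ would be a $P$-point. Both arguments finish identically, since autohomeomorphisms of $\N^*$ preserve $P$-sets. Your route has the advantage of staying entirely within the paper's own lemmas rather than importing an external combinatorial fact, and it foreshadows the proof of Theorem~\ref{thm:noppoints}; the paper's route gives slightly more, namely a minimal right ideal that is not even a weak $P$-set, which matters for the remark following the corollary about weak $P$-sets sufficing. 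Your reading of (1) as non-equivalence of embeddings (non-transitivity of the autohomeomorphism action on minimal right ideals) matches the paper's intent.
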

\begin{proof}
$(1)$ Let $K = \bigcup \set{I \sub \N^*}{I \text{ is a minimal right ideal}}$ and fix $p \in \N^* \setminus K$. (Note: by Corollary 4.41 in \cite{H&S}, $q \in \closure K$ if and only if every member of $q$ is piecewise syndetic, from which it easily follows that $K$ is nowhere dense and some such $p$ exists). The orbit closure of $p$, namely $\closure{\set{\s^n(p)}{n \in \N}}$, is subsystem of $\b\N$. By Zorn's Lemma, every subsystem of $\b\N$ contains a minimal subsystem $I$; in particular, there is a minimal right ideal $I \sub \closure{\set{\s^n(p)}{n \in \N}}$. Since $I$ is closed under $\s^{-1}$ and $p \notin I$, $\s^n(p) \notin I$ for all $n$. It follows that $I$ is not a $P$-set.

$(2)$ Let $I$ be a $P$-set and a minimal right ideal. If $p \in I$, then $p+q \in I$ because $I$ is a right ideal. If $p \notin I$, then, $p + q = \qlim \s^n(p)$ is an element of $\closure{\set{\s^n(p)}{n \in \N}}$. Since, as in $(1)$, $\s^n(p) \notin I$ for all $n$, we have $I \cap \closure{\set{\s^n(p)}{n \in \N}} = \0$. In particular, $p + q \notin I$.

$(3)$ Let $I$ be a minimal right ideal satisfying $(2)$, and let $p \in I$ be idempotent. Let $q$ be any idempotent other than $p$. If $q \in I$ then $q$ is $\leq$-minimal, so $p \not \leq q$. If $q \notin I$ then $q + p \notin I$ by $(2)$, in which case $p \not \leq q$.
\end{proof}

We remark that our work here is not necessary for proving the consistency of $(1)$: there are $2^\continuum$ minimal right ideals and, in a model due to Shelah, there are only $\continuum$ autohomeomorphisms of $\N^*$. That this conclusion follows from $\tower = \continuum$ is a new result, and we do not know whether it follows from ZFC.

\begin{question}
Is any of the three propositions listed in Corollary~\ref{cor:twoquestions} provable from \emph{ZFC}?
\end{question}

It may be possible to solve all three parts of this question at once. Recall that a \textbf{weak $P$-set} is a closed $X \sub \N^*$ such that if $Y \sub \N^* \setminus X$ is countable then $X \cap \closure{Y} = \0$. The proof of Corollary~\ref{cor:twoquestions} does not require that we have a minimal right ideal be a $P$-set: having a minimal right ideal that is a weak $P$-set is enough. In ZFC, it is impossible to prove that $P$-points exist (by the aforementioned result of Shelah), but it is possible to prove that weak $P$-points exist (a result of Kunen; see \cite{Kun}). Perhaps a similar situation holds for the minimal right ideals.

\begin{question}
Is some minimal right ideal a weak $P$-set?
\end{question}

While ZFC is not strong enough to prove that some minimal right ideal is a $P$-set, we can show that some ``small'' right ideal is a $P$-set.

\begin{theorem}\label{thm:Pset}
There is a nonempty, nowhere dense $P$-set $X \sub \N^*$ such that $\s(X) = X$. In particular, there is a nowhere dense $P$-set that is also a closed right ideal.
\end{theorem}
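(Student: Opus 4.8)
The plan is to realize $X$ as $\F^*$ for a concrete translation-invariant $P$-filter $\F$, the most natural candidate being the filter of asymptotic-density-one sets. So I would set $\F = \set{A \sub \N}{d(A) \text{ exists and equals } 1}$, where $d(A) = \lim_{n \to \infty}\card{A \cap [0,n)}/n$. That $\F$ is a free filter is routine: it is visibly upward closed and contains all cofinite sets, it is proper since $d(\0)=0$, and it is closed under finite intersections because upper density is subadditive, so $A,B \in \F$ forces the upper density of $(\N\setminus A)\cup(\N\setminus B)$ to be $0$. Since translating a set by one step changes neither its density nor the existence of that density, $A \in \F$ implies $A-1 \in \F$ and $A+1 \in \F$; by Lemma~\ref{lem:thickfilters} this gives $\s(\F^*)=\F^*$, so $\F^*$ is a closed subsystem of $(\b\N,\s)$ and hence, by Proposition~\ref{prop:ideals}, a closed right ideal. (A density-one set is also thick, so $\F$ is in fact a $\thick$-filter, but we do not need this.) Finally $\F^*$ is nonempty, being dual to a proper free filter.

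Next I would verify that $\F$ is a $P$-filter. Given a countable $\mathcal A \sub \F$, replace its members by their finite intersections to get a $\sub$-decreasing sequence $A_1 \supseteq A_2 \supseteq \cdots$ in $\F$; it suffices to produce $B \in \F$ with $B \sub^* A_n$ for every $n$. The construction is the usual diagonalization: choose $m_1 < m_2 < \cdots$ growing fast enough that $\card{A_n \cap [0,m)} > (1-1/n)m$ for all $m \geq m_n$ and also $m_{n+1} \geq 4nm_n$, and set $B = \bigcup_{n \geq 1}\bigl(A_n \cap [m_n,m_{n+1})\bigr)$. Then $B \setminus A_k \sub [m_1,m_k)$ is finite, so $B \sub^* A_k$ for each $k$; and estimating $\card{[0,m)\setminus B}$ on the block $[m_n,m_{n+1})$ containing $m$ by $m_1 + \sum_{j<n} m_{j+1}/j + m/n$, and using the fast growth of the $m_j$ to dominate the tail sum by $O(m/n)$, shows $d(\N\setminus B)=0$, i.e. $B \in \F$. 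Hence every countable subset of $\F$ has a lower bound in $\F$, so $\F$ is a $P$-filter and $X = \F^*$ is a $P$-set. (Equivalently, this is the classical fact that the density-zero ideal is a $P$-ideal.)

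It remains to see that $X$ is nowhere dense; since $X$ is closed this means $X$ has empty interior. A nonempty basic open subset of $\N^*$ is $D^*$ for some infinite $D$, and $D^* \sub X = \bigcap_{A \in \F}A^*$ would force $D \sub^* A$ for every density-one $A$. But any infinite $D = \{d_1 < d_2 < \cdots\}$ contains the infinite density-zero set $D' = \set{d_{2^k}}{k \geq 1}$ (since $d_{2^k} \geq 2^k$, so $\card{D' \cap [0,n)} < \log_2 n$), and then $A = \N \setminus D'$ lies in $\F$ while $D \cap D' = D'$ is infinite, so $D \not\sub^* A$. Thus no $D^*$ is contained in $X$, and $X$ is nowhere dense. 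The only step needing genuine care is the density estimate for $B$ in the middle paragraph; everything else is bookkeeping.
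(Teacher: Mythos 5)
Your proof is correct, but it takes a genuinely different route from the paper. The paper does not use the density filter at all: it fixes a sparse sequence $\seq{b_n}{n\in\N}$ with $b_{n+1}-b_n\to\infty$, takes an auxiliary filter $\G$ with $\G^*$ a nowhere dense $P$-set (obtained from a maximal chain in $\Inf$, i.e.\ via Zorn's Lemma), and generates $\F$ by the sets $X(G,f)=\bigcup_{n\in G}[b_n-f(n),b_n+f(n)]$ with $f\to\infty$; shift-invariance, the $P$-filter property, and nowhere density are then checked for this basis, the last step being a somewhat delicate argument splitting into the cases where a putative pseudo-intersection $A$ meets the blocks deeply or shallowly. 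You instead take the canonical filter of asymptotic-density-one sets, and your three verifications are all sound: translation invariance of density gives $\s(\F^*)=\F^*$ via Lemma~\ref{lem:thickfilters} and hence a closed right ideal via Proposition~\ref{prop:ideals}; the diagonal construction of $B=\bigcup_n\bigl(A_n\cap[m_n,m_{n+1})\bigr)$ with $m_{n+1}\geq 4nm_n$ does yield $d(B)=1$ (your block estimate works, since the tail sum $\sum_{j<n}m_{j+1}/j$ is geometrically dominated by $m_n/(n-1)\leq m/(n-1)$), which is exactly the classical fact that the density-zero ideal is a $P$-ideal; and tallness of that ideal (every infinite $D$ contains an infinite density-zero set) gives nowhere density. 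What your route buys is a single explicit, choice-free, Borel-definable example, with no auxiliary nowhere dense $P$-set $\G^*$ needed; what the paper's route buys is flexibility (the construction works over any nowhere dense $P$-set $\G^*$ and produces a basis of thick sets tailored to the block structure used elsewhere in the paper), but for the theorem as stated your argument is simpler and complete.
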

\begin{proof}
The second claim of the theorem follows from the first claim and Proposition~\ref{prop:ideals}.

By Lemma~\ref{lem:thickfilters}, it suffices to find a $\thick$-filter $\F$ such that $\F^*$ is nowhere dense, every countable subset of $\F$ has a pseudo-intersection in $\F$, and, for every $A \in \F$, $A+1, A-1 \in \F$.

Let $B = \set{b_n}{n \in \N}$ satisfy $\lim_{n \to \infty}(b_{n+1}-b_n) = \infty$. Let $\G$ be a filter such that $\G^*$ is a nowhere dense $P$-set (it is easy to prove that such a $\G$ exists: take a maximal chain in $\Inf$ and let $\G$ be the filter generated by it). Let $\B$ denote the family of all functions $f: \N \to \N$ such that $\lim_{n \to \infty}f(n) = \infty$. By a standard diagonalization argument, if $f_0, f_1, f_2, \dots \in \B$ then there is some $f \in \B$ such that $f <^* f_n$ for every $n$.

For every $G \in \G$ and $f \in \B$, let
$$X(G,f) = \bigcup_{n \in G} [b_n-f(n),b_n+f(n)].$$
Let $G_0, G_1 \in \G$ and $f_0,f_1 \in \B$. $G_2 = G_0 \cap G_1 \in \G$, and if we define $f_2: \N \to \N$ by $f_2(n) = \min\{f_0(n),f_1(n)\}$ then $f_2 \in \B$. Clearly $X(G_2, f_2) \sub X(G_0, f_0) \cap X(G_1, f_1)$. Thus $\set{X(G,f)}{G \in \G, f \in \B}$ is a filter base; let $\F$ denote the filter generated by it. $\F$ is clearly a $\thick$-filter; we now show that it has the other required properties.

For $f \in \B$, let $f^-(n) = f(n)-1$ and note that $f^- \in \B$ whenever $f \in \B$. If $G \in \G$ and $f \in \B$, it follows from our definitions that $X(G,f)+1 \supseteq X(G,f^-)$ and $X(G,f)-1 \supseteq X(G,f^-)$. Consequently, for every $A \in \F$ we have $A+1, A-1 \in \F$.

For each $n$, let $G_n \in \G$ and $f_n \in \B$. Since $\G^*$ is a $P$-set, there is some $G_\infty \in \G$ that is a pseudo-intersection for the $G_n$. As noted above, there is also some $f_\infty \in \B$ such that $f_\infty <^* f_n$ for every $n$. Then $X(G_\infty,f_\infty)$ is a pseudo-intersection for $\set{X(G_n,f_n)}{n \in \N}$. Since the $X(G,f)$ form a basis for $\F$, $\F$ is a $P$-filter.

Finally we show that $\F^*$ is nowhere dense. Using Stone duality, we have that $\F^*$ is nowhere dense if and only if $\F$ is unbounded in $\Inf$. Suppose this is not the case, and let $A$ be a bound for $\F$.

For each $n$, let
$$\a(n) = \max \set{\min\{a-b_n,b_{n+1}-a\}}{a \in A \cap [b_n,b_{n+1}]}.$$
If the range of $\a$ is unbounded, then there is some function $f \in \B$ such that $\a \not <^* f$. In this case, $X(\N,f)$ misses infinitely many elements of $A$, contradicting the assumption that $A$ is a pseudo-intersection for $\F$.

Therefore $\a$ is bounded: there is some $k$ such that $\a(n) < k$ for all $n$. Fix $N$ large enough so that $b_{n+1} - b_n > 2k$ whenever $n > N$. Fix $f \in \B$ so that $k \leq f(n) \leq \lfloor \frac{b_n - b_{n-1}}{2} \rfloor$ for all $n > N$. Define $B = \set{n > N}{|b_n-a| < k \text{ for some }a \in A}$. Since $\G^*$ is nowhere dense, $\G$ has no pseudo-intersection; in particular, $B$ is not a pseudo-intersection for $\G$. Thus there is some $G \in \G$ such that $B \setminus G$ is infinite. Then $X(G,f)$ misses infinitely many elements of $A$ (namely all of those in $[b_n-k,b_n+k]$ for $n \in B \setminus G$), contradicting the assumption that $A$ is a pseudo-intersection for $\F$.
\end{proof}

\begin{corollary}
There is a nowhere dense closed $I \sub \N^*$ such that, for any $p,q \in \b\N$, $p+q \in I$ if and only if $p \in I$.
\end{corollary}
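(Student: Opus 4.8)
The plan is to let $I$ be the set $X$ furnished by Theorem~\ref{thm:Pset} and then to run the argument of Corollary~\ref{cor:twoquestions}(2) with $X$ in the role of the minimal right ideal. First I would check that $I = X$ is an admissible choice: $X$ is closed, nowhere dense, and satisfies $\s(X) = X$, so by Proposition~\ref{prop:ideals} it is a closed right ideal; and since $X$ is a $P$-set it is in particular a weak $P$-set. These are exactly the two features of $I$ that the proof of Corollary~\ref{cor:twoquestions}(2) actually uses.

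For the ``if'' direction, $p \in I$ gives $p + q \in I + \b\N \sub I$ since $I$ is a right ideal. For the converse, suppose $p \notin I$. Because $\s$ restricts to an autohomeomorphism of $\N^*$ and $\s(I) = I$, we get $\s^n(I) = I$ for every $n$, so $\s^n(p) \notin I$ for all $n \in \N$ (otherwise $p = \s^{-n}(\s^n(p))$ would belong to $I$). Hence $\set{\s^n(p)}{n \in \N}$ is a countable subset of $\N^* \setminus I$, so the weak $P$-set property of $I$ yields $I \cap \closure{\set{\s^n(p)}{n \in \N}} = \0$; since $p + q = \qlim \s^n(p)$ lies in that closure, we conclude $p + q \notin I$.

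I do not anticipate a genuine obstacle: the entire content is that the right ideal $X$ from Theorem~\ref{thm:Pset} is a weak $P$-set, which is precisely the hypothesis under which the dynamical argument of Corollary~\ref{cor:twoquestions}(2) operates. The only step calling for a little care is the orbit-invariance observation — that the forward $\s$-orbit of a point outside $I$ remains outside $I$ — which comes from promoting $\s(I) = I$ to $\s^{-1}(I) = I$ via the bijectivity of $\s$ on $\N^*$.
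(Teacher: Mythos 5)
Correct, and essentially identical to the paper's proof, which is simply a pointer to the argument of Corollary~\ref{cor:twoquestions}(2) (the paper's citation of part (3) is evidently a slip) applied to the $\s$-invariant nowhere dense $P$-set right ideal of Theorem~\ref{thm:Pset}; your write-up fills in exactly those steps, including promoting $\s(I)=I$ via bijectivity of $\s$ on $\N^*$ to get $\s^n(p)\notin I$, and using that a $P$-set is a weak $P$-set. One caveat you share with the paper: for principal $p=m\in\N$ the step ``$\set{\s^n(p)}{n \in \N}\sub\N^*\setminus I$'' fails, and in fact $m+q=\s^m(q)\in I$ whenever $q\in I$, so the ``only if'' direction should be understood for free $p$ (i.e.\ $p\in\N^*$).
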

\begin{proof}
See the proof of Corollary~\ref{cor:twoquestions}$(3)$.
\end{proof}

\section{A consistent characterization of $\s$}\label{sec:shiftmap}

In this section we show that it is both consistent with and independent of ZFC that the shift map $\s$ is, up to isomorphism, the unique chain transitive autohomeomorphism of $\N^*$. The ``independence'' part of this result will use the nowhere dense $P$-set of Theorem~\ref{thm:Pset}.

Recall that two dynamical systems $(X,f)$ and $(Y,g)$ are \textbf{isomorphic} (or, for some authors, conjugate), written $(X,f) \iso (Y,g)$ or $X \iso Y$, if there is a homeomorphism $h: X \to Y$ such that the following diagram commutes:
\begin{center}
\begin{tikzpicture}[style=thick, xscale=.5,yscale=.5]

\draw (0,0) node {$X$} -- (0,0);
\draw (4,0) node {$X$} -- (4,0);
\draw (0,-4) node {$Y$} -- (0,-4);
\draw (4,-4) node {$Y$} -- (4,-4);
\draw (4.6,-2) node {$h$} -- (4.6,-2);
\draw (-.6,-2) node {$h$} -- (-.6,-2);
\draw (2,.5) node {$f$} -- (2,.5);
\draw (2,-3.5) node {$g$} -- (2,-3.5);

\draw[->] (4,-.5) -- (4,-3.5);
\draw[->] (0,-.5) -- (0,-3.5);
\draw[->] (.6,0) -- (3.4,0);
\draw[->] (.5,-4) -- (3.5,-4);

\end{tikzpicture}\end{center}
$Y$ is a \textbf{quotient} of $X$ (or is semi-conjugate to $X$) if $h: X \to Y$ is a continuous surjection, but not necessarily a homeomorphism, for which the above diagram commutes. In this case, $h$ is called a \textbf{quotient map} (or a semi-conjugation).

Chain transitivity is a well studied property of metric dynamical systems. While it is typically defined in terms of some fixed metric, it is actually a topological property (i.e., it is preserved by isomorphisms), and, replacing metrics with uniformities, chain transitivity can be defined for non-metrizable systems as well.

Explicitly, if $X$ is a dynamical system and $\U$ is an open cover of $X$, we say that $\seq{x_i}{i \leq n}$ if a $\U$\textbf{-chain} from $x_0$ to $x_n$ if, for every $i < n$, there is some $U \in \U$ such that $f(x_i) \in U$ and $x_{i+1} \in U$. Let us say that a (not necessarily metric) dynamical system $X$ is \textbf{chain transitive} if for every open cover $\U$ of $X$ and every $x,y \in X$, there is a $\U$-chain from $x$ to $y$. The reader can easily check that this is equivalent to the usual definition of chain transitivity for metric systems (use the Lebesgue Covering Lemma for the nontrivial direction).

The following lemma is essentially proved in \cite{Akn} as Theorem 4.12, although the notation is different and the proof there is for metric systems. For completeness, we give a proof here as well.

\begin{lemma}\label{lem:breakingtransitivity}
$(X,f)$ fails to be chain transitive if and only if there is some nonempty open $U \neq X$ and closed $C \sub U$ such that $f(U) \sub C$.
\end{lemma}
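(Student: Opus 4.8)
The plan is to prove both implications directly from the definition of chain transitivity, using open covers. The key observation is that the existence of a ``trap'' $C \subseteq U$ with $f(U) \subseteq C$ is exactly what prevents a chain from escaping a certain region, while conversely, any failure of chain transitivity must be witnessed by the union of all points reachable by long enough chains.

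For the easy direction ($\Leftarrow$): suppose we are given nonempty open $U \neq X$ and closed $C \subseteq U$ with $f(U) \subseteq C$. The idea is that no $\U$-chain can escape $U$ if $\U$ is chosen finely enough. Since $C$ is closed and $C \subseteq U$, and $X$ is compact Hausdorff (hence normal), I can find an open set $V$ with $C \subseteq V \subseteq \closure{V} \subseteq U$. Now take $\U$ to be the open cover consisting of $V$ together with $X \setminus \closure{V}$ (and, if one wants a cover that genuinely sees all of $X$, any open refinement — but these two sets already cover $X$). Pick $x \in U$ and $y \notin U$: I claim there is no $\U$-chain from $x$ to $y$. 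Indeed, any $\U$-chain starting at $x_0 = x \in U$: since $f(x_0) \in f(U) \subseteq C \subseteq V$, the only element of $\U$ containing $f(x_0)$ is $V$ (as $f(x_0) \in C \subseteq V$ and $f(x_0) \notin X \setminus \closure V$), so $x_1 \in V \subseteq U$; inductively every $x_i \in U$, so the chain can never reach $y \notin U$. Hence $(X,f)$ is not chain transitive. (One should double-check the edge case where $U$ fails to cover by itself; adding $X \setminus \closure V$ handles this, and the argument that $f(x_i) \in V$ forces $x_{i+1} \in V$ only used that $V$ is the unique cover element meeting $C$.)

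For the harder direction ($\Rightarrow$): suppose $(X,f)$ is not chain transitive, so there is an open cover $\U$ and points $x, y$ with no $\U$-chain from $x$ to $y$. Define $U$ to be the set of all $z \in X$ such that there \emph{is} a $\U$-chain from $x$ to $z$ (including the trivial chain $\langle x \rangle$, so $x \in U$); then $y \notin U$, so $U \neq X$. The plan is to show (a) $U$ is open, (b) there is a closed $C \subseteq U$ with $f(U) \subseteq C$. For (a): if $z \in U$, pick $W \in \U$ with $f(z) \in W$; then for any $z'$ we have $f(z) \in W$, and if $z' \in W$ then appending $z'$ to a chain ending at $z$ gives a $\U$-chain from $x$ to $z'$ — wait, that shows $W \subseteq U$, giving openness of $U$ provided we also note $x$ itself lies in some cover element... actually the cleanest formulation: for each $z \in U$ choose $W_z \in \U$ with $f(z) \in W_z$; then $W_z \subseteq U$, and $U = \bigcup_{z \in U} W_z$ is open (we also need $x \in$ some open subset of $U$, but $x \in W_x \subseteq U$ since $f(x) \in W_x$). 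For (b): the natural candidate is $C = \closure{f(U)}$, and I need $\closure{f(U)} \subseteq U$. Given $z \in U$ with chosen $W_z \in \U$ as above, we have $f(U) \subseteq \bigcup_{z \in U} W_z \subseteq U$; but to get the \emph{closure} inside $U$ I should instead build $U$ as a union of closures. Here is the fix: for $z \in U$, by normality/regularity of the compact Hausdorff space $X$, since $f(z) \in W_z$ open, choose open $V_z$ with $f(z) \in V_z \subseteq \closure{V_z} \subseteq W_z$. Then $\closure{V_z} \subseteq W_z \subseteq U$, and since this holds for each $z$, and $f(U) \subseteq \bigcup_z V_z$, we get $\closure{f(U)} \subseteq \bigcup_z \closure{V_z}$? — no, closure of a union need not be the union of closures.

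The genuine fix, and the step I expect to be the main obstacle, is to arrange $U$ itself to be of the form $\bigcup_{z} \closure{V_z}$ or to argue directly. Cleaner approach: show that $U$ as defined (all $z$ chain-reachable from $x$) is in fact \emph{closed under a neighborhood-taking operation} because being chain-reachable is "stable": if $f(z) \in W \in \U$ then every point of $W$ is chain-reachable, so $U \supseteq \bigcup\{W \in \U : W \cap f(U) \neq \0\} \supseteq f(U)$, and this set $U' := \bigcup\{W \in \U : W \cap f(U) \neq \0\}$ is open with $f(U) \subseteq U' \subseteq U$. Now here is the key: I also claim $f(U) \subseteq f(U')$ trivially since $U' \subseteq U$... that doesn't immediately close it. Instead, replace the roles: set $U = U'$ (the union of cover elements meeting $f(U_{\text{old}})$), note $x$ might not be in $U$, so handle $x$ separately — actually $x \in U_{\text{old}}$ and $f(x) \in$ some $W \ni f(x)$ with $W$ meeting $f(U_{\text{old}})$, so $x \in W \subseteq U'$; good, so $x \in U = U'$, and $y \notin U$ since $y \notin U_{\text{old}} \supseteq U'$. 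Finally, for $C$: since $X$ is compact Hausdorff and $f(U)$ is a subset of the open set $U$, and $f$ is continuous so $\closure{f(U)} \subseteq \closure{U}$, I instead take $C = \closure{f(U)}$ and must verify $\closure{f(U)} \subseteq U$; this follows because $f(U) \subseteq U_{\text{old}}$'s reachable set is "upward closed under $\U$" — precisely, $\closure{f(U)} \subseteq \closure{\bigcup\{W : W \cap f(U) \neq \0\}}$, and by compactness finitely many $W$'s suffice near any limit point... The honest resolution is the standard one: define $C = \set{z \in X}{\text{there is a } \U\text{-chain from } x \text{ to } z \text{ of length} \geq 1}$, i.e., chains with at least one step. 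Then $f(U) \subseteq C$ directly (append the step $z \mapsto f(z)$, which lands in some $W \ni f(z)$, then... need $f(z) \in W$ and some chain element in $W$: take the chain ending at $z$, then $f(z) \in W$ and $f(z) \in W$, so $f(z)$ is reachable by a chain of length $\geq 1$). And $C \subseteq U$ obviously. The closedness of $C$: $C = \bigcup\{W \in \U : W \cap f(U) \neq \0\}$ is \emph{open}, not closed — so take $C$ to be its closure and re-verify $\closure{C_{\text{open}}} \subseteq U$; this final containment I would establish by choosing $\U$ to have a closed shrinking (possible since $X$ is normal and $\U$ can be taken finite by compactness), replacing each $W \in \U$ by a closed $F_W \subseteq W$ that still covers, and running the whole argument with $F_W$'s for the "landing" and $W$'s for the membership. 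That shrinking-the-cover trick is the crux, and I would present it carefully; everything else is routine point-set topology.
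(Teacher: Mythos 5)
There are genuine problems in both directions as written. In the easy direction your cover is not a cover: $\{V, X\setminus\closure{V}\}$ misses every point of $\closure{V}\setminus V$, so the parenthetical claim that ``these two sets already cover $X$'' is false, and the failure of chain transitivity has not actually been witnessed. The normality detour is also unnecessary: since $C\sub U$ is closed, $\U=\{U,\,X\setminus C\}$ \emph{is} an open cover, and your own induction runs verbatim with it (if $x_i\in U$ then $f(x_i)\in C$, so the only member of $\U$ containing $f(x_i)$ is $U$, forcing $x_{i+1}\in U$); this two-set cover is exactly what the paper uses.

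In the hard direction your final bookkeeping is the right one --- let $R$ be the set of points chain-reachable from $x$ and work with the open set $U'=\bigcup\set{W\in\U}{W\cap f(R)\neq\0}$, which satisfies $f(R)\sub U'\sub R$, is nonempty (it contains $f(x)$) and omits $y$ --- but the proposal does not finish: the one step the lemma actually needs, a \emph{closed} $C$ with $f(U')\sub C\sub U'$, is explicitly deferred (``closed shrinking of a finite subcover \dots I would present it carefully''), after several retracted attempts. You also twice argue ``$f(x)\in W$, hence $x\in W$'' in order to get $x\in U'$; that inference is invalid, and it is also unneeded, since the lemma only asks that $U$ be nonempty, which $f(x)\in U'$ already gives. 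The missing device is the one the paper uses: rather than shrinking a finite subcover (which drags in compactness and normality), take $C$ to be the complement of the union of the unused cover elements, here $C=X\setminus\bigcup\set{W\in\U}{W\cap f(R)=\0}$. This $C$ is closed outright; $C\sub U'$ because $\U$ covers $X$; and $f(U')\sub f(R)\sub C$ because any $W\in\U$ containing a point of $f(R)$ meets $f(R)$ and hence is not among the discarded elements. With that one line your argument closes; your shrinking plan could be made to work, but it is strictly heavier machinery than the complement trick the paper's proof is built on.
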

\begin{proof}
Suppose there is some open $U \neq X$ and closed $C \sub U$ such that $f(U) \sub C$. Then $\U = \{U,X \setminus C\}$ is an open cover of $X$, and there is no $\U$-chain from a point of $U$ to a point of $X \setminus U$.

For the other direction, suppose $X$ is not chain transitive. Fix some open cover $\U$ of $X$, and some $x,y \in X$ such that there no $\U$-chain from $x$ to $y$. Notice that if there is a $\U$-chain from $x$ to any point of some $V \in \U$ then there is a $\U$-chain from $x$ to every point of $V$. Let $\U_0$ be the set of all $V \in \U$ such that there is a $\U$-chain from $x$ to some (any) point of $V$, and let $\U_1 = \U \setminus \U_0$. Set $U = \bigcup \U_0$ and $C = X \setminus \bigcup \U_1$. $\0 \neq U \neq X$ because $x \in U \not\ni y$. Because $\U$ covers $X$, we have $C \sub U$. If $f(U) \setminus C \neq \0$, then clearly it is possible to find a $\U$-chain from $x$ to a point of $X \setminus C = \bigcup \U_1$, contradicting the definition of $\U_1$. Thus $f(U) \sub C$.
\end{proof}

\begin{lemma}
A map $F: \N^* \to \N^*$ is chain transitive if and only if, for every infinite, co-infinite $A \sub \N$, $F(A^*) \not\sub A^*$.
\end{lemma}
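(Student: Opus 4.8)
The plan is to read off this equivalence from Lemma~\ref{lem:breakingtransitivity}, applied with $X=\N^*$ and $f=F$, by contraposing both sides. The input facts I will use are standard features of $\N^*$: it is compact, Hausdorff and zero-dimensional, its clopen subsets are exactly the sets $A^*$ for $A\sub\N$, and for such $A$ one has $A^*\neq\0$ iff $A$ is infinite and $A^*\neq\N^*$ iff $A$ is co-infinite. Thus it suffices to show: $F$ \emph{fails} to be chain transitive if and only if there is an infinite, co-infinite $A\sub\N$ with $F(A^*)\sub A^*$.

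For the easy direction, suppose $A$ is infinite and co-infinite and $F(A^*)\sub A^*$. Put $U=C=A^*$. Then $U$ is open and nonempty (as $A$ is infinite), $U\neq\N^*$ (as $A$ is co-infinite), $C\sub U$ is closed, and $F(U)\sub C$. By Lemma~\ref{lem:breakingtransitivity}, $F$ is not chain transitive.

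For the converse, suppose $F$ is not chain transitive. By Lemma~\ref{lem:breakingtransitivity} there are a nonempty open $U\neq\N^*$ and a closed $C\sub U$ with $F(U)\sub C$; note $C\neq\0$, since otherwise $F(U)=\0$ would force $U=\0$. Now I interpolate a clopen set between $C$ and $U$: since $C$ is a compact subset of the open set $U$ and $\N^*$ is zero-dimensional, each point of $C$ lies in a basic clopen set contained in $U$, and a finite subcover gives a clopen $V$ with $C\sub V\sub U$. Writing $V=A^*$, the inclusion $\0\neq C\sub A^*$ shows $A$ is infinite, and $A^*=V\sub U\subsetneq\N^*$ shows $A$ is co-infinite. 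Finally $F(A^*)=F(V)\sub F(U)\sub C\sub V=A^*$, as required.

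The argument is essentially a one-line translation of Lemma~\ref{lem:breakingtransitivity}, so I do not anticipate any real obstacle; the only step that is not utterly formal is the clopen interpolation between $C$ and $U$, and that uses nothing beyond compactness and zero-dimensionality of $\N^*$.
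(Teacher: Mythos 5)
Your proof is correct and follows essentially the same route as the paper: both directions reduce to Lemma~\ref{lem:breakingtransitivity}, and the key step is interpolating a clopen set $A^*$ with $C\sub A^*\sub U$ by a compactness argument (the paper intersects clopen supersets of $C$, you cover $C$ by clopen subsets of $U$ -- the same idea in dual form). Your added checks (that $C\neq\0$ and that $A$ is infinite and co-infinite) are fine and in fact make explicit what the paper leaves implicit.
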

\begin{proof}
Let $U$ be an open subset of $\N^*$ with $U \neq \N*$, and let $C \sub U$ be nonempty and closed. Since $C$ is closed, $C = \bigcap \set{A^*}{A^* \supseteq C}$. By Corollary 3.1.5 in \cite{Eng}, there is some finite set $\set{A_i}{i \leq n}$ such that $\bigcap_{i \leq n}A_i^* \sub U$. Since $\bigcap_{i \leq n}A_i^* = (\bigcap_{i \leq n}A_i)^*$, there is some $A \sub \N$ such that $C \sub A^* \sub U$. If $f(U) \sub C$ then $f(A^*) \sub A^*$, so the result follows from Lemma~\ref{lem:breakingtransitivity}.
\end{proof}

\begin{theorem}\label{thm:N*chaintransitive}
The shift map on $\N^*$ is chain transitive.
\end{theorem}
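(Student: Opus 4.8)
The plan is to apply the lemma just established, which characterizes chain transitivity of a self-map $F$ of $\N^*$ by the condition that $F(A^*) \not\sub A^*$ for every infinite, co-infinite $A \sub \N$. Thus it suffices to fix an infinite, co-infinite $A \sub \N$ and show that $\s(A^*) \not\sub A^*$. I would first record that, by Stone duality, $A^* = \F^*$ where $\F = \set{B \sub \N}{A \sub^* B}$, that $\F$ is a free filter (it contains every cofinite set), and that $A \in \F$.

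Now suppose toward a contradiction that $\s(A^*) \sub A^*$, i.e. $\s(\F^*) \sub \F^*$. By Lemma~\ref{lem:thickfilters} this forces $B - 1 \in \F$ for every $B \in \F$; taking $B = A$ gives $A \sub^* A - 1$. The one genuine step is the observation that this is impossible when $A$ is co-infinite: an integer $n$ lies in $A \setminus (A-1)$ exactly when $n \in A$ and $n+1 \notin A$, so $A \setminus (A-1)$ is the set of right endpoints of the maximal runs of consecutive integers contained in $A$. If this set were finite, then past its maximum every element of the (infinite) set $A$ would be immediately followed by another element of $A$; choosing any such $n_0 \in A$ would give $[n_0,\infty) \sub A$, making $A$ cofinite and contradicting co-infiniteness. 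Hence $A \setminus (A-1)$ is infinite, so $A \not\sub^* A - 1$ --- a contradiction. Therefore $\s(A^*) \not\sub A^*$, and since $A$ was arbitrary the preceding lemma yields that $\s$ is chain transitive.

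I do not expect any real obstacle here: the argument reduces, via Lemma~\ref{lem:thickfilters}, to the elementary combinatorial fact that the only $A$ with $A \sub^* A - 1$ are the cofinite sets (equivalently, the only $\thick$-filters fixed by $\s^{-1}$ are the ones whose filters contain a co-finite set). The only point needing a little care is keeping straight the bookkeeping between $\s$ and $\s^{-1}$ and the direction of the biconditional in Lemma~\ref{lem:thickfilters}, but that lemma performs exactly this bookkeeping for us. Alternatively, one could argue directly: since $A \setminus (A-1)$ is infinite, pick any free ultrafilter $p$ with $A \setminus (A-1) \in p$; then $A \in p$ while $A - 1 \notin p$, so $p \in A^*$ but $\s(p) \notin A^*$, witnessing $\s(A^*) \not\sub A^*$ without reference to Lemma~\ref{lem:thickfilters}.
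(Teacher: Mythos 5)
Your proof is correct and is essentially the paper's argument: both invoke the preceding characterization lemma and reduce everything to the elementary fact that $\set{n \in A}{n+1 \notin A}$ is infinite whenever $A$ is infinite and co-infinite. The only cosmetic difference is that your main write-up detours through Lemma~\ref{lem:thickfilters} and the filter of almost-supersets of $A$, whereas the paper (like the alternative you sketch at the end) just computes $\s(A^*) = (A+1)^*$ and notes that $(A+1) \cap (\N \setminus A)$ is infinite, so $\s(A^*) \cap (\N^* \setminus A^*) \neq \0$.
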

\begin{proof}
Suppose $A$ is infinite and co-infinite. Then $(A+1) \cap (\N \setminus A) = \set{n \in \N}{n \in A \text{ and } n+1 \notin A}$ is infinite. In particular,
$$\s(A^*) \cap (\N^* \setminus A^*) = (A+1)^* \cap (\N \setminus A)^* = ((A+1) \cap (\N \setminus A))^* \neq \0.$$
This shows that $\s(A^*) \not\sub A^*$.
\end{proof}

\begin{lemma}\label{lem:chaintransitivequotients}
If $X$ is chain transitive and $Y$ is a quotient of $X$, then $Y$ is chain transitive.
\end{lemma}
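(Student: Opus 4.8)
The plan is to use the characterization of chain transitivity from Lemma~\ref{lem:breakingtransitivity} and pull back the obstructing open set along the quotient map. Concretely, suppose $Y$ is \emph{not} chain transitive, and let $h: X \to Y$ be the quotient map, so that $h \circ f = g \circ h$ where $f$ and $g$ are the respective dynamics. By Lemma~\ref{lem:breakingtransitivity} there is a nonempty open $V \neq Y$ and a closed $C \sub V$ with $g(V) \sub C$. I would then set $U = h^{-1}(V)$ and $D = h^{-1}(C)$; since $h$ is continuous, $U$ is open and $D$ is closed, and $D \sub U$. From $h$ being surjective we get $U \neq X$ (because $V \neq Y$), and $U$ is nonempty (because $V$ is). It remains to check $f(U) \sub D$: if $x \in U$ then $h(x) \in V$, so $g(h(x)) \in C$, and by the commuting square $h(f(x)) = g(h(x)) \in C$, hence $f(x) \in h^{-1}(C) = D$. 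So $f(U) \sub D \sub U$ with $U$ open, nonempty, and proper, which by Lemma~\ref{lem:breakingtransitivity} contradicts chain transitivity of $X$.

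The only subtlety worth flagging is that the proof runs entirely through the open-cover-free reformulation in Lemma~\ref{lem:breakingtransitivity}, rather than trying to lift $\U$-chains directly; attempting to lift chains is awkward because a cover of $Y$ pulls back to a cover of $X$ but the chain would have to be constructed in $X$ from scratch, whereas the obstruction formulation makes everything a one-line diagram chase. There is no real obstacle here — the main point is simply to notice that one should work with the obstruction rather than with chains, and that the preimage of a chain-transitivity obstruction under any semi-conjugation is again such an obstruction. I will also note in passing that surjectivity of $h$ is used twice (to get $U \neq X$ and, implicitly, so that $h^{-1}(V) \neq \0$ follows from $V \neq \0$, though the latter only needs $V$ nonempty and $h$ defined everywhere — it is the properness $U \neq X$ that genuinely needs surjectivity).

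\begin{proof}
Let $h: X \to Y$ be a quotient map, and let $f$ and $g$ denote the dynamics on $X$ and $Y$ respectively, so that $h \circ f = g \circ h$. Suppose, for contradiction, that $Y$ is not chain transitive. By Lemma~\ref{lem:breakingtransitivity}, there is a nonempty open $V \neq Y$ and a closed $C \sub V$ with $g(V) \sub C$. Set $U = h^{-1}(V)$ and $D = h^{-1}(C)$. Since $h$ is continuous, $U$ is open and $D$ is closed, and clearly $D \sub U$. Since $h$ is surjective and $V \neq Y$, we have $U \neq X$; since $V \neq \0$ and $h$ is surjective, $U \neq \0$. Finally, if $x \in U$ then $h(x) \in V$, so $h(f(x)) = g(h(x)) \in g(V) \sub C$, whence $f(x) \in h^{-1}(C) = D$. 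Thus $f(U) \sub D \sub U$, with $U$ open, nonempty, and not equal to $X$. By Lemma~\ref{lem:breakingtransitivity}, $X$ is not chain transitive, a contradiction.
\end{proof}
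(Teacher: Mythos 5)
Your proof is correct, but it takes a genuinely different route from the paper. You argue by contrapositive through Lemma~\ref{lem:breakingtransitivity}: a chain-transitivity obstruction $(V,C)$ in $Y$ pulls back along the semi-conjugation $h$ to an obstruction $(h^{-1}(V),h^{-1}(C))$ in $X$, and the diagram chase $h(f(x))=g(h(x))\in C$ closes the argument; all the hypotheses of the lemma (openness, closedness, nonemptiness, properness) are checked correctly, with surjectivity invoked where needed. The paper instead proves the lemma directly from the cover definition: given an open cover $\U$ of $Y$ and points $y,z$, it pulls the cover back to $\mathcal V=\set{Q^{-1}(U)}{U\in\U}$, lifts $y,z$ to points of $X$ using surjectivity, obtains a $\mathcal V$-chain in $X$ from chain transitivity of $X$, and pushes it forward through $Q$, commutativity making the image a $\U$-chain. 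So the chain-lifting you dismissed as awkward is exactly the paper's argument, and it is no harder than yours: the chain in $X$ is not built from scratch but handed over by chain transitivity of $X$ applied to the pulled-back cover. The trade-off is that the paper's proof uses only the definition and does not depend on Lemma~\ref{lem:breakingtransitivity}, whereas yours needs both directions of that lemma (which the paper does prove) but avoids covers and chains entirely and isolates the reusable fact that obstructions pull back along any semi-conjugation. One small slip in your commentary, not in the proof: nonemptiness of $h^{-1}(V)$ does not follow from $h$ merely being defined everywhere --- it requires the image of $h$ to meet $V$, for which surjectivity is the natural hypothesis, and your proof environment does invoke surjectivity correctly for both $U\neq\0$ and $U\neq X$.
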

Again, this is proved in \cite{Akn} for metric spaces, and the proof generalizes easily to the non-metric case.
\begin{proof}
Let $Q: X \to Y$ be a quotient map, let $\U$ be an open cover of $Y$, and let $y,z \in Y$. $\mathcal V = \set{Q^{-1}(U)}{U \in \U}$ is an open cover of $X$. Fixing some $x_y \in Q^{-1}(y)$ and $x_z \in Q^{-1}(z)$, there is a $\mathcal V$-chain $\seq{x_i}{i \leq n}$ in $X$ from $x_y$ to $x_z$. Then $\seq{Q(x_i)}{i \leq n}$ is a $\U$-chain from $y$ to $z$.
\end{proof}

We say that a map $F: \N^* \to \N^*$ is \textbf{trivial} if there is some map $f: \N \to \N$ such that $F = \b f \rest \N^*$.

\begin{lemma}\label{lem:trivialmaps}
If $\t$ is any chain transitive trivial map on $\N^*$, then $(\N^*,\t) \iso (\N^*,\s)$.
\end{lemma}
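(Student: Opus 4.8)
The plan is to show that any chain transitive trivial map $\t = \b f \rest \N^*$ must be induced by a bijection-like map on $\N$, and in fact by a shift. First I would analyze what chain transitivity forces on the underlying $f : \N \to \N$. By the lemma preceding Theorem~\ref{thm:N*chaintransitive}, $\t$ chain transitive means $\b f(A^*) \not\sub A^*$ for every infinite co-infinite $A$. Translating: for cofinitely-many-stable reasons, we cannot have $f^{-1}(A) \sub^* A$ for any infinite, co-infinite $A$. The first key step is to rule out $f$ being finite-to-one but not eventually injective, and to rule out $f$ being infinite-to-one on some set: if $f^{-1}(\{n\})$ is infinite for some $n$, or more generally if $f$ collapses an infinite set to a finite one, then $\b f$ is constant on a clopen piece $B^*$, and one checks this produces an invariant proper clopen set, contradicting chain transitivity via Lemma~\ref{lem:breakingtransitivity}. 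So $f$ must be finite-to-one.

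Next, with $f$ finite-to-one, I would argue that $f$ is eventually a bijection of $\N$ onto a cofinite subset, and then eventually injective with $f(n) \in \{n-1, n+1\}$-type behavior — more precisely, that the ``displacement'' $f(n) - n$ is bounded. The idea: if $f$ moves points arbitrarily far in one direction along a sparse set, one can build an infinite co-infinite $A$ (a union of appropriately chosen intervals/points) with $f^{-1}(A) \sub^* A$, i.e. $\b f(A^*) \sub A^*$, contradicting chain transitivity. This is the heart of the argument: chain transitivity of $\b f$ should be equivalent to $f$ being, modulo finite sets, a ``bounded-displacement'' finite-to-one surjection, and combined with finite-to-one-ness this should force $f$ to agree off a finite set with a permutation of $\N$ that is a bounded-displacement permutation. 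The crucial additional point is that chain transitivity must also prevent $f$ from having an invariant ``interval structure'': any bounded-displacement finite-to-one $f$ that is not eventually $n \mapsto n+1$ either has a periodic block decomposition or a reflection-like piece, each of which yields an invariant proper clopen set (a union of residue classes or of a sparse interval family) and hence breaks chain transitivity. I expect this classification — showing the only surviving possibility is that $f(n) = n+1$ for all but finitely many $n$ — to be the main obstacle, since it requires a careful combinatorial case analysis of how a finite-to-one self-map of $\N$ can fail to produce an invariant proper $A^*$.

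Finally, once we know $f(n) = n+1$ for all $n \geq N$, I would produce the isomorphism. Let $g : \N \to \N$ be any bijection with $g(n) = n$ for $n < N$... actually more simply: since $f$ agrees with $n \mapsto n+1$ off a finite set, $\b f$ and $\s$ agree on $\N^*$ outside the finite set, but finite sets are invisible in $\N^*$, so in fact $\b f \rest \N^* = \s \rest \N^*$ already. Wait — one must be careful, since altering $f$ on finitely many points can alter $\b f$ on all of $\N^*$ only through where those finitely many points map; but as those finitely many points of $\N$ are isolated and not in $\N^*$, and the behavior on a cofinite set determines $\b f \rest \N^*$ completely (any $p \in \N^*$ contains the cofinite set $[N,\infty)$ on which $f$ and $n \mapsto n+1$ agree, so $\b f(p) = \s(p)$). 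Hence $\t = \s$ on $\N^*$ literally, and the identity homeomorphism $h = \mathrm{id}_{\N^*}$ witnesses $(\N^*,\t) \iso (\N^*,\s)$. So the last step is essentially free once the classification of $f$ is done; all the work is in the middle paragraph.
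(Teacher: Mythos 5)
Your middle step---the claim that chain transitivity forces the underlying map $t$ to have bounded displacement and to satisfy $t(n)=n+1$ for all but finitely many $n$---is false, and with it the final step collapses. For a counterexample, let $\varphi\colon \N \to \N$ be any bijection and define $t$ by $t(\varphi(m)) = \varphi(m+1)$. Then $\varphi \circ s = t \circ \varphi$ (where $s(m)=m+1$), so the induced autohomeomorphism $\varphi^*$ of $\N^*$ conjugates $\s$ to $\t = \b t \rest \N^*$; hence $\t$ is chain transitive (chain transitivity is preserved by isomorphism), yet $t$ may have unbounded displacement and need not agree with $n \mapsto n+1$ at even a single point (take $\varphi$ enumerating $\N$ in the order $0,2,1,4,3,6,5,\dots$, or a wilder bijection for unbounded displacement). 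Consequently your conclusion that $\t = \s$ literally, witnessed by the identity, cannot be obtained: the lemma asserts only isomorphism precisely because equality fails, and the conjugating homeomorphism genuinely has to be constructed. (A smaller slip: the correct translation of chain transitivity for a trivial map is that $t(A)\setminus A$ is infinite for every infinite, co-infinite $A$ --- the \emph{forward} image condition $\t(A^*) \not\sub A^*$ --- not a condition on $t^{-1}(A)$.)

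The missing idea, and the route the paper takes, is to analyze orbits rather than to pin down $t$ itself. From the forward-image condition one shows: only finitely many $n$ can have finite forward orbit $\mathcal{O}(n) = \set{t^m(n)}{m \in \N}$ (otherwise an infinite, co-infinite union of finite orbits would be $t$-invariant); any infinite orbit satisfies $t(\mathcal{O}(n)) \sub^* \mathcal{O}(n)$ and so must be cofinite; and then $\varphi(m) = t^m(n)$ is a bijection of $\N$ onto a cofinite set with $\varphi(m+1) = t(\varphi(m))$, so $\varphi^*$ is an autohomeomorphism of $\N^*$ conjugating $\s$ to $\t$. Your first observations (no invariant proper set modulo finite, and the use of finite orbits to build invariant sets) are in the right spirit and survive; the bounded-displacement classification should be discarded in favor of this orbit-enumeration argument.
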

\begin{proof}
Suppose $t: \N \to \N$ is a function such that $\t = \b t \rest \N^*$ is a chain transitive self-map of $\N^*$. By continuity, $\t(A^*) = (t(A))^*$ for every $A \sub \N$. If $A \sub \N$ and $t(A) \sub^* A$, then $\t(A^*) = A^*$. If $\0 \neq A^* \neq \N^*$, this contradicts the chain transitivity of $\t$ by Lemma~\ref{lem:breakingtransitivity}. Thus, unless $A$ is either finite or cofinite, $t(A) \setminus A$ must be infinite.

For each $n$, let $\mathcal O(n) = \set{t^m(n)}{n \in \N}$. If $\mathcal O(n)$ is finite, then $n$ is periodic under $t$ and $\mathcal O(n)$ is invariant under $t$. Suppose there are infinitely many $n$ with $\mathcal O(n)$ finite. If $\mathcal A$ is an infinite, co-infinite subset of $\set{\mathcal O(n)}{\card{\mathcal O(n)} < \aleph_0}$, then $\bigcup \mathcal A$ is infinite and co-infinite, and is also invariant under $t$. This contradicts the conclusion of the previous paragraph, so there are finitely many $n$ with $\mathcal O(n)$ finite.

In particular, $\mathcal O(n)$ is infinite for some $n$. Since $\mathcal O(t(n)) \setminus \mathcal O(n) = \{n\}$, we have $t(\mathcal O(n)) = \mathcal O(t(n)) \sub^* \mathcal O(n)$. This implies $\mathcal O(n)$ is cofinite, again using the first paragraph of our proof.

Fix $n$ with $\mathcal O(n)$ cofinite and let $\varphi: \N \to \N$ be given by $\varphi(m) = t^m(n)$. Consider $\b \varphi: \b\N \to \b\N$. Since $\varphi$ is a bijection of $\N$ onto a cofinite subset of $\N$, $\b \varphi$ restricts to an autohomeomorphism $\varphi^*: \N^* \to \N^*$. For every $A \sub \N$, we have $\varphi(A+1) = t(\varphi(A))$. By continuity, $\varphi^*(\s(A^*)) = \varphi^*((A+1)^*) = \t(\varphi^*(A))$ for every $A$. It follows that, for every $p \in \N^*$, $\varphi^*(\s(p)) = \t(\varphi^*(p))$. Thus $\varphi^*$ is an isomorphism $(\N^*,\s) \to (\N^*,\t)$.
\end{proof}

Borrowing a famous result of Shelah, we now have a consistent characterization of $\s$:

\begin{theorem}\label{thm:consistency}
It is consistent with \emph{ZFC} that (up to isomorphism) $\s$ is the only chain transitive autohomeomorphism of $\N^*$.
\end{theorem}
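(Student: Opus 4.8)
The plan is to quote Shelah's theorem that it is consistent with ZFC for every autohomeomorphism of $\N^*$ to be trivial, and then to let Lemma~\ref{lem:trivialmaps} together with Theorem~\ref{thm:N*chaintransitive} do all the remaining work. So the proof will be very short: the substantive content is already packaged in those two earlier results and in Shelah's theorem.

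Concretely, I would first pass to a model of ZFC in which every autohomeomorphism of $\N^*$ is trivial in the sense of this paper, i.e., of the form $\b f \rest \N^*$ for some $f : \N \to \N$; a famous theorem of Shelah (see \cite{Sh2}) produces such a model, since in it every autohomeomorphism of $\N^*$ is induced by a bijection between two cofinite subsets of $\N$, and any extension of such a bijection to a function $\N \to \N$ induces the same map on $\N^*$ (the two extensions agree on a cofinite set, hence on $\N^*$). Working inside this model, I would let $F$ be an arbitrary chain transitive autohomeomorphism of $\N^*$. By the choice of model, $F$ is trivial, so by Lemma~\ref{lem:trivialmaps} we get $(\N^*, F) \iso (\N^*, \s)$. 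Conversely, $\s$ is itself an autohomeomorphism of $\N^*$ (it is induced by the bijection $n \mapsto n+1$ of $\N$ onto the cofinite set $\N \setminus \{0\}$), and it is chain transitive by Theorem~\ref{thm:N*chaintransitive}. Hence in this model $\s$ is, up to isomorphism, the unique chain transitive autohomeomorphism of $\N^*$, which is exactly the assertion of the theorem.

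I do not expect a genuine obstacle here; the only points needing care are cosmetic — matching the precise form of Shelah's conclusion (``trivial'' in his sense meaning induced by an almost-permutation of $\N$) with the notion of ``trivial'' used in this paper, and pointing out at the outset that $\s$ really is an autohomeomorphism of $\N^*$ rather than merely a continuous self-map. I would also append a remark that the same conclusion follows from PFA, since PFA likewise implies that every autohomeomorphism of $\N^*$ is trivial, so the statement ``$\s$ is the unique chain transitive autohomeomorphism of $\N^*$'' is consistent with a wide range of additional set-theoretic hypotheses.
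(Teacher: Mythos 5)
Your proposal is correct and matches the paper's proof, which likewise just invokes Shelah's consistency result that every autohomeomorphism of $\N^*$ is trivial and lets Lemma~\ref{lem:trivialmaps} and Theorem~\ref{thm:N*chaintransitive} carry the rest (the paper cites Shelah's \emph{Proper Forcing} \cite{Shl} for this, and your extra care in reconciling ``trivial'' with an almost-permutation of $\N$ is a harmless elaboration of the same argument).
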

\begin{proof}
Shelah proves in \cite{Shl} that it is consistent with ZFC that every autohomeomorphism of $\N^*$ is trivial.
\end{proof}

The next theorem gives the complementary result:

\begin{theorem}
Assuming the Continuum Hypothesis, there is a chain transitive autohomeomorphism of $\N^*$ that is not isomorphic to $\s$.
\end{theorem}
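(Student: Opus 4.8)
The plan is to use the nowhere dense $P$-set $X\sub\N^*$ with $\s(X)=X$ provided by Theorem~\ref{thm:Pset}, collapse it to a point, and recognise the quotient as another copy of $\N^*$ carrying a chain transitive autohomeomorphism --- one which, unlike $\s$, has a fixed point.

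First I would form the quotient space $Z=\N^*/X$, with quotient map $q\colon\N^*\to Z$ collapsing $X$ to a point $z_0$ and injective on $\N^*\setminus X$. Then I would verify that $Z$ is a Parovi\v{c}enko space: compact, Hausdorff, zero-dimensional, of weight $\continuum$, with no isolated points, an $F$-space, and such that every nonempty $G_\delta$ subset has nonempty interior. The point $z_0$ is not isolated because, $X$ being nowhere dense, it is properly contained in every clopen neighborhood of itself. The $F$-space property and the $G_\delta$-property are exactly where the $P$-set hypothesis enters, via the standard equivalence: $X$ is a $P$-set iff every open $F_\sigma$ subset of $\N^*$ disjoint from $X$ has closure disjoint from $X$, iff every $G_\delta$ subset of $\N^*$ containing $X$ is a neighborhood of $X$. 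Using the first form one checks that disjoint open $F_\sigma$ subsets of $Z$ have disjoint closures (the only potential obstruction is at $z_0$, which is removed by the $P$-set property, while away from $z_0$ one uses that clopen subsets of $\N^*$ are $F$-spaces); using the second form one checks that a nonempty $G_\delta$ of $Z$ containing $z_0$ pulls back to a neighborhood of $X$, whose interior maps onto a neighborhood of $z_0$. By Parovi\v{c}enko's theorem, under CH $Z$ is homeomorphic to $\N^*$; fix a homeomorphism $h\colon Z\to\N^*$.

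Next, since $\s(X)=X$ (hence also $\s^{-1}(X)=X$), the composite $q\circ\s$ is constant on the fibres of $q$, so it factors as $\bar\s\circ q$ for a unique continuous $\bar\s\colon Z\to Z$; applying the same reasoning to $\s^{-1}$ shows $\bar\s$ is a bijection, hence --- being a continuous bijection of a compact Hausdorff space --- an autohomeomorphism of $Z$. Put $F=h\circ\bar\s\circ h^{-1}$, an autohomeomorphism of $\N^*$. Since $q$ commutes with the dynamics, $(Z,\bar\s)$ is a quotient of the chain transitive system $(\N^*,\s)$ (Theorem~\ref{thm:N*chaintransitive}), so $(Z,\bar\s)$, and hence the isomorphic system $(\N^*,F)$, is chain transitive by Lemma~\ref{lem:chaintransitivequotients}. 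On the other hand $\bar\s(z_0)=q(\s(X))=q(X)=z_0$, so $(\N^*,F)$ has a fixed point; but $\s$ has no fixed point in $\N^*$, since $\s(p)=p$ would make $\{p\}$ a minimal right ideal of $\b\N$, whereas minimal right ideals of $\b\N$ are infinite (indeed, of cardinality $2^\continuum$; see \cite{H&S}). As having a fixed point is an isomorphism invariant, $(\N^*,F)\not\iso(\N^*,\s)$, as required.

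I expect the main obstacle to be the verification in the second paragraph that $Z=\N^*/X$ is genuinely a Parovi\v{c}enko space --- in particular that it is an $F$-space and has weight exactly $\continuum$ (the latter because the ``$G_\delta\Rightarrow$ interior'' property together with the absence of isolated points prevents $Z$ from being first countable at any point, forcing weight $\geq\aleph_1=\continuum$ under CH). Everything else is essentially formal: the descent of $\s$ to the quotient is routine, the chain transitivity of the quotient is immediate from Lemma~\ref{lem:chaintransitivequotients}, and the only facts needed about $\s$ itself --- chain transitivity and the absence of fixed points --- are already available.
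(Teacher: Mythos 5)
Your proposal is correct and follows essentially the same route as the paper: collapse the shift-invariant nowhere dense $P$-set of Theorem~\ref{thm:Pset} to a point, descend $\s$ to the quotient, obtain chain transitivity from Lemma~\ref{lem:chaintransitivequotients}, and distinguish the resulting system from $(\N^*,\s)$ by the fixed point at the collapsed set. The only difference is that where the paper cites Corollary 1.2.4 of \cite{JvM} for the homeomorphism of the quotient with $\N^*$ under CH, you verify the Parovi\v{c}enko properties directly (correctly locating where the $P$-set hypothesis enters, namely the $F$-space and $G_\delta$ conditions) and then invoke Parovi\v{c}enko's theorem, which is in substance how that cited corollary is established.
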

\begin{proof}
Using Theorem~\ref{thm:Pset} (or Theorem~\ref{thm:main} plus CH), let $X \sub \N^*$ be a nowhere dense $P$-set with $\s(X) = X$. Let $Y$ be the quotient space obtained from $\N^*$ from the relation
$$p \sim q \qquad \Leftrightarrow \qquad p = q \ \text{ or } \ p,q \in X.$$
That is, $Y$ is the compact Hausdorff space obtained from $\N^*$ by collapsing $X$ to a point.

Because $\s(X) = X$, $[\s(x)] = [\s(y)]$ whenever $x \sim y$; define $\t$ on $Y$ by setting $\t([x]) = [\s(x)]$. It is easy to check that this map is continuous, so that $(Y,\t)$ is a dynamical system. Clearly $\t$ is a bijection on $Y$ (because $\s$ is a bijection on $\N^*$), so that $\t$ is a continuous bijection, hence an autohomeomorphism, on $Y$. By construction $(Y,\t)$ is a quotient of $(\N^*,\s)$ and is therefore chain transitive by Lemma~\ref{lem:chaintransitivequotients}.

Because $X$ is nowhere dense in $\N^*$, Corollary 1.2.4 in \cite{JvM} implies that $Y \homeo \N^*$. Thus $\t$ is a chain transitive autohomeomorphism of $\N^*$. We note that Corollary 1.2.4 in \cite{JvM} requires CH; in fact, it has been shown that it is equivalent to CH (see \cite{vDM}).

To complete the proof, we observe that $\t([x]) = [x]$ for $x \in X$. Since $\s$ has no fixed points, $(\N^*,\s)$ and $(Y,\t)$ cannot be isomorphic.
\end{proof}

Roughly speaking, there is a lot of distance between CH and Shelah's model where every autohomeomorphism of $\N^*$ is trivial. This leaves open the question of whether there is more than one chain transitive autohomeomorphism of $\N^*$ under other natural hypotheses, for example MA. We also do not know whether another adjective or two can be added to ``chain transitive autohomeomorphism'' to obtain a topological characterization of $\s$ in ZFC.








\begin{thebibliography}{99}
\bibitem{Akn} E. Akin, \emph{The General Topology of Dynamical Systems}, Graduate Studies in Mathematics, American Mathematical Society, reprint edition, 2010.
\bibitem{Brg} V. Bergelson, ``Minimal idempotents and ergodic Ramsey theory,'' in \emph{Topics in Dynamics and Ergodic Theory} (2003), London Mathematical Society Lecture Note Series, Vol. 310, Cambridge University Press, Cambridge, pp. 8-39.
\bibitem{B&H} V. Bergelson and N. Hindman, ``Nonmetrizable topological dynamics and Ramsey theory,'' \emph{Transactions of the American Mathematical Society} \textbf{320} no. 1 (1990), pp. 293-320.
\bibitem{BHM} V. Bergelson, N. Hindman, and R. McCutcheon, ``Notions of size and combinatorial properties of quotient sets in semigroups,'' \emph{Topology Proceedings} \textbf{23} (1998), pp. 23-60.
\bibitem{Bls} A. Blass, ``Ultrafilters: where topological dynamics = algebra = combinatorics,'' \emph{Topology Proceedings} \textbf{18} (1993), pp. 33-56.
\bibitem{AB2} A. Blass ``Combinatorial cardinal characteristics of the continuum,'' in the \emph{Handbook of Set Theory} (2010), eds. A. Kanamori et. al., Springer, 2010.
\bibitem{EvD} E. K. van Douwen, ``The integers and topology'' in the \emph{Handbook of Set-Theoretic Topology} (1984), eds. K. Kunen and J. E. Vaughan, North-Holland, pp. 111-168.
\bibitem{vDM} E. K. van Douwen and J. van Mill, ``Parovicenko's characterization of $\b\w - \w$ implies CH,'' \emph{Proceedings of the American Mathematical Society} \textbf{72}, no. 3 (1978), pp. 539-541.
\bibitem{Bth} D. Booth, ``Ultrafilters on a countable set,'' \emph{Annals of Mathematical Logic} \textbf{2}, no. 1 (1971), pp. 1-24.
\bibitem{Eng} R. Engelking, \emph{General Topology}. Sigma Series in Pure Mathematics, 6, Heldermann, Berlin (revised edition), 1989.
\bibitem{H&S} N. Hindman and D. Strauss, \emph{Algebra in the Stone-\v{C}ech compactification}, De Gruyter, Berlin, 1998.
\bibitem{Kun} K. Kunen, ``Weak $P$-point in \textit{\textbf{N}}$^*$,'' \emph{Colloq. Math. Soc. J\'anos Bolyai}, vol. \textbf{23}, \emph{Topology}, Budapest, 1978.
\bibitem{M&S} M. Malliaris and S. Shelah, ``Cofinality spectrum theorems in model theory, set theory, and general topology,'' preprint (2012), arXiv:1208.5424.
\bibitem{JvM} J. van Mill, ``An introduction to $\b\w$,'' in the \emph{Handbook of Set-Theoretic Topology} (1984), eds. K. Kunen and J. E. Vaughan, North-Holland, pp. 503-560.
\bibitem{Rud} W. Rudin, ``Homogeneity problems in the theory of \v{C}ech-Stone compactifications,'' \emph{Duke Mathematics Journal} \textbf{23} (1956), pp. 409-419.
\bibitem{Shl} S. Shelah, \emph{Proper Forcing}, Lecture Notes in Mathematics, vol. 940, Springer-Verlag, Berlin, 1982.
\bibitem{Sh2} S. Shelah, \emph{Proper and Improper Forcing}, Perspectives in Mathematical Logic, Springer, 1998.
\bibitem{Wim} E. L. Wimmers, ``The Shelah $P$-point independence theorem,'' \emph{Israel Journal of Mathematics} \textbf{43} vol. 1 (1982), pp. 28-48.
\end{thebibliography}
\end{document}